\documentclass[12pt,a4paper]{article}

\usepackage{latexsym,amssymb,amsthm}
\usepackage{url,graphics}
\usepackage[dvips]{graphicx,epsfig}

\title{\bf The tree of decomposition of a biconnected graph}
\author{D.\,V.\,Karpov}
\date{}

\begin{document}
\maketitle
\righthyphenmin=2
\renewcommand*{\proofname}{\bf Proof}
\newtheorem{thm}{Theorem}
\newtheorem{lem}{Lemma}
\newtheorem{cor}{Corollary}
\theoremstyle{definition}
\newtheorem{defin}{Definition}
\theoremstyle{remark}
\newtheorem{rem}{\bf Remark}

\def\mmax{\mathop{\rm max}}
\def\q#1.{{\bf #1.}}
\def\P{{\rm Part}}
\def\I{{\rm Int}}
\def\ch{{\rm ch}}
\def\R{{\rm Bound}}
\def\B{{\rm BT}}
\def\N{{\rm N}}
\def\GM{{\cal GM}}

\def\q#1.{{\bf #1.}}

\sloppy
\righthyphenmin=2
\exhyphenpenalty=10000

\newcommand{\ov}{\overline}

\section{Introduction}
We consider undirected graphs without loops and multiple edges and use standard notations.
For a graph $G$ we denote the set of its vertices by  $V(G)$ and the set of its edges by $E(G)$. We use notations~$v(G)$ and~$e(G)$ for the number of vertices and edges of $G$, respectively. 

We denote the {\it degree} of a vertex $x$ in the graph $G$ by $d_G(x)$. We denote the maximal vertex degree of the graph~$G$ by~$\Delta(G)$. 

Let $\N_G(w)$ denote the {\it neighborhood}  of a vertex  $w\in V(G)$ (i.e. the set of all vertices of the graph~$G$, adjacent to~$w$).

We denote by $\chi(G)$ the {\em chromatic number} of the graph $G$, i.e. the minimal number of colors in a proper vertex coloring of the graph $G$.

For a set of vertices $U\subset V(G)$ we denote by $G(U)$ the induced subgraph of the graph $G$ on the set $U$.

Before introducing the results of our paper let us recall the classic notions of {\em block} and {\em cutpoint} of a connected graph and some of their properties.

\begin{defin} 1) 
For any set $R\subset V(G)\cup E(G)$ we denote by~${G-R}$ the graph obtained from~$G$ after deleting all verices and edges of the set~$E$ and all edges incident to vertices of $R$.

2) Let $x,y\in V(G)$, $xy\notin E(G)$. We denote by $G+xy$ the graph obtained from  $G$ after adding the edge~$xy$.
\end{defin}

\begin{defin}
Let $G$ be a connected graph.  A vertex $a\in V(G)$  is called a {\em cutpoint}, if the graph $G-a$ is disconnected.

A {\em block} of the graph $G$ is its maximal up to inclusion subgraph without cutpoints.
\end{defin}%

Blocks and cutpoints are important instruments, that helped to prove a lot of facts in different areas of graph theory.

\begin{defin}
The {\em tree of blocks and cutpoints}  of a graph~$G$ is a bipartite graph $B(G)$. The vertices of the first part correspond to all cutpoints  $a_1,\dots,a_n$ of the graph  $G$, the vertices of the second part correspond to all blocks $B_1,\dots, B_m$ of the graph $G$ (we denote these vertices as the correspondent blocks). The vertices  $a_i$ and $B_j$ are adjacent if and only if $a_i\in V(B_j)$. 
\end{defin}

It is easy to prove, that the tree of blocks and cutpoints is really a tree and all its leaves correspond to blocks (the proofs can be found in~\cite{X} and other books). Just this tree structure helps to use blocks and cutpoints.

In 1966 Tutte~\cite{T} have constructed a tree that describes the structure of relative disposition of 2-vertex cutsets in a biconnected graph.
We present our point of view  to this problem and construct  a {\it tree of decomposition} for a biconnected graph and, in more general case, for a set of pairwise independent $k$-vertex cutsets in a  $k$-connected graph. Our construction is similar to the one of Tutte but we use other instrument to describe the structure --- the notion of a {\em part of decomposition}, developed in~\cite{k02}.
As a result we obtain a tree that has more in common with classic tree of blocks and cutpoints that Tutte's one.

It is important to show that the developed construction is useful.
We  use the tree of decomposition of a biconnected graph   for estimating the chromatic number of a biconnected graph. With the help of our  construction we describe the critical biconnected graphs.
Before formulating our results we recall some basic notations in connectivity theory.

\subsection{Basic notations }

In this paper the {\em connected component} of a graph is the vertex set of  its maximal up to inclusion connected subgraph.

\begin{defin} Let $R\subset V(G)$. 

1) We call $R$  a {\em cutset}, if the graph $G-R$ is disconnected. 
Denote by  $\mathfrak R(G)$ the set of all cutsets of the graph $G$ and by $\mathfrak R_k(G)$ the set of all  $k$-vertex cutsets of $G$.

2) Let $X,Y \subset V(G)$, $X\not\subset R$, $Y\not\subset R$.  We say that $R$ {\it separates} the set~$X$ from~$Y$, if no two vertices~$v_x\in X$ and~$v_y\in Y$ belongs to the same connected component of the graph $G-R$.

3) We say that  $R$ {\em splits} a set~$X \subset V(G)$,  if the set  $X\setminus R$  is not contained in one connected component of the graph~${G-R}$.

4) A graph~$G$ is {\em $k$-connected}, if $v(G)>k$ and  $G$ has no cutset  that  consists of at most $k-1$ vertices.
\end{defin}

Cutpoints of a connected graph defined above are its 1-vertex cutsets.

\begin{defin}
Let $G$ be a $k$-connected graph. We say that  cutsets ${S,T\in\mathfrak R_k(G)}$ are {\em independent}, if~$S$ does not split~$T$ and~$T$ does not split~$S$. Otherwise we say that these cutsets are {\em dependent}.
\end{defin}

Unfortunately, cutsets consisting of  $k\ge 2$ vertices can be dependent. That arises difficulties in studying the structure of $k$-connected graphs for ${k\ge 2}$.  It is proved in~\cite{Hoh,KP} that there are two alternatives for cutsets ${S,T\in \mathfrak R_k(G)}$: 
whether $S$ and $T$ are independent or each of them splits the other. The proof of this fact is simple.

The following notions introduced in~\cite{k02} are useful for description of the relative disposition of cutsets in a graph.

\begin{defin}
Let~$\mathfrak S \subset \mathfrak{R}(G)$.

1) A set  $A\subset V(G)$ is a {\em part of $\mathfrak S$-decomposition}, if no cutset of $\mathfrak S$ splits $A$, but any vertex $b\in V(G) \setminus A$ is separated from $A$ by some cutset of $\mathfrak S$.

We denote the set of all parts of $\mathfrak S$-decomposition of the graph $G$ by $\P(G,\mathfrak S)$.  In the cases when it is clear what graph is decomposed we will write simply $\P(\mathfrak S)$.

2)  Let $A\in \P(\mathfrak S)$.  A vertex   $x\in A$ is an  {\em inner} vertex, if it doesn't belong to any cutset of~$\mathfrak S$.  The set of all inner vertices of the part $A$ is called the {\em interior} of~$A$ and denoted by~$\I(A)$. 

A vertex $x\in A$ is a  {\em boundary} vertex, if it belongs to some cutset of~$\mathfrak S$.  The set of all boundary vertices of the part $A$ is called the {\em boundary} of~$A$ and denoted by~$\R(A)$. 
\end{defin}

Clearly, $A=\I(A)\cup \R(A)$. A proof of the following lemma is simply and can be found in~\cite[theorem 2]{k06}.

\begin{lem}
\label{tbound1}
Let $\mathfrak S \subset \mathfrak R(G)$ and $A\in \P(\mathfrak S)$. Then the following statements hold.

$1)$ A vertex $x\in \I(A)$ is not adjacent to any vertex from~$ V(G)\setminus A$.  The boundary $\R(A)$  consists of all vertices of the part~$A$, that have adjacent vertices in~$ V(G)\setminus A$.

$2)$ If $\I(A)\ne\varnothing$, then $\R(A)$ separates $\I(A)$  from $V(G)\setminus A$.
\end{lem}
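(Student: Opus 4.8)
The plan is to unpack the definition of a part of $\mathfrak S$-decomposition and reduce both statements to the defining property that every vertex outside $A$ is separated from $A$ by some cutset of $\mathfrak S$.

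First I would prove statement $1)$. Let $x\in\I(A)$ and suppose, for contradiction, that $x$ is adjacent to some vertex $y\in V(G)\setminus A$. By the definition of a part, there is a cutset $S\in\mathfrak S$ that separates $y$ from $A$; in particular $S$ separates $y$ from $x$, so $x$ and $y$ lie in different connected components of $G-S$. But an edge $xy$ can be destroyed in $G-S$ only if one of its endpoints lies in $S$. Since $x\in\I(A)$ is an inner vertex it belongs to no cutset of $\mathfrak S$, so $x\notin S$; and $y\notin A\supset S\cap A$, but I must be careful here — $y$ could a priori lie in $S$. If $y\in S$ then $y$ is not separated from anything by $S$ in the relevant sense, so I would instead argue directly: the existence of the edge $xy$ with $x\notin S$ forces $x$ and $y$ into the same component of $G-S$ unless $y\in S$. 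Thus the only escape is $y\in S$, and then I need a second cutset or a sharper reading of ``separates'' to finish. This is the step I expect to be the main obstacle: ruling out the case where the outside neighbour $y$ sits in the separating cutset itself. I would resolve it by noting that ``$S$ separates $y$ from $A$'' presupposes $y\not\subset S$ (the definition of \emph{separates} requires $Y\not\subset R$), so $y\notin S$; combined with $x\notin S$ this keeps the edge $xy$ intact in $G-S$ and places $x,y$ in one component, contradicting separation. This proves no inner vertex has an outside neighbour. The converse inclusion of the second sentence is immediate: any vertex of $A$ with a neighbour outside $A$ cannot be inner by what we just proved, hence lies in $\R(A)=A\setminus\I(A)$, and conversely the first sentence shows boundary vertices are exactly the candidates that may have such neighbours — but to get equality I must also observe that $\R(A)$ is precisely the complement of $\I(A)$ in $A$, so every non-inner vertex is boundary.

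For statement $2)$, assuming $\I(A)\neq\varnothing$, I would show $\R(A)$ separates $\I(A)$ from $V(G)\setminus A$ by contradiction: suppose some $u\in\I(A)$ and $w\in V(G)\setminus A$ lie in the same connected component of $G-\R(A)$, joined by a path $P$ avoiding $\R(A)$. Since $u\in A$ and $w\notin A$, the path $P$ must cross from $A$ to its complement along some edge $x'y'$ with $x'\in A$, $y'\notin A$. By part $1)$, such an $x'$ cannot be inner, so $x'\in\R(A)$, contradicting the fact that $P$ avoids $\R(A)$. Hence no such path exists and $\R(A)$ indeed separates the interior from the exterior. Throughout I would lean on Definition of \emph{separates} and on the characterization $A=\I(A)\cup\R(A)$ already noted in the text, keeping the argument entirely combinatorial and avoiding any appeal to the structure of individual cutsets beyond their separating property.
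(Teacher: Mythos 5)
The paper does not actually prove this lemma --- it refers the reader to \cite[theorem 2]{k06} --- so there is no in-paper argument to compare yours against; your proposal has to stand on its own. Your treatment of the first sentence of statement 1 is correct: the point you correctly isolate and resolve is that the definition of ``separates'' requires $\{y\}\not\subset S$, hence $y\notin S$, while $x\notin S$ because $x$ is inner, so the edge $xy$ survives in $G-S$ and contradicts the separation of $y$ from $A\ni x$. Your proof of statement 2 is also correct: a path from $\I(A)$ to $V(G)\setminus A$ avoiding $\R(A)$ must use an edge leaving $A$, whose endpoint in $A$ is forced into $\R(A)$ by the first sentence.

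The gap is in the second sentence of statement 1, which asserts an \emph{equality}: $\R(A)$ is exactly the set of vertices of $A$ with a neighbour in $V(G)\setminus A$. You prove only the inclusion ``a vertex of $A$ with an outside neighbour is not inner, hence lies in $\R(A)=A\setminus\I(A)$''. The reverse inclusion --- that every boundary vertex actually \emph{has} a neighbour outside $A$ --- is not a formal consequence of what you proved; your remark that ``every non-inner vertex is boundary'' only restates $\R(A)=A\setminus\I(A)$ and does not produce the required outside neighbour. This direction needs a genuine argument: given $x\in A$ with $x\in S\in\mathfrak S$, one observes that $A\setminus S$ lies in a single component of $G-S$ (since $S$ does not split $A$), so some other component $C$ of $G-S$ is disjoint from $A$, and one must then show $x$ has a neighbour in $C$. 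That last step uses minimality of the cutset (or, in the paper's intended setting, that $S$ is a $k$-cutset of a $k$-connected graph, so each vertex of $S$ has a neighbour in every component of $G-S$); for an arbitrary, possibly non-minimal cutset it can fail --- e.g.\ in the path $1$--$2$--$3$--$4$--$5$ with $\mathfrak S=\{\{2,3\}\}$ and $A=\{1,2,3\}$, the boundary vertex $2$ has no neighbour outside $A$. So either supply this connectivity argument or note explicitly under which hypotheses the equality is being claimed.
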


Consider as an illustration for this notions a simple and, in the same time, important particular case: the decomposition of a  $k$-connected graph by one  $k$-vertex cutset~$S$. What is  a part~$A\in \P(S)$? It is easy to see that its interior~$\I(A)$ is a connected component of the graph~$G-S$, and $A$ is the union of this component and the cutset~$S$. Hence, the induced subgraph~$G(A)$ is connected and each vertrex of the cutset~$S$ is adjacent to at least one vertex of~$\I(A)$.

Let us return to the case~$k=1$. Cutpoints of a connected graph~$G$ are its cutsets, their union is~$\mathfrak R_1(G)$. The vertex sets of  blocks of the graph~$G$ are parts of~$\P(\mathfrak R_1(G))$.  We formulate some basic properties of blocks and cutpoints in the language of parts of decomposition. The proof can be found in~\cite{X} and other books.

\begin{lem}
\label{b1}
Let $a$ and $b$ be cutpoints of a connected graph~$G$ and $U\in \P(\{a\})$ be the part that contains~$b$. Then the following statements hold. 

$1)$ The vertex~$b$ is a cutpoint of the graph~$G(U)$.

$2)$ Any cutpoint of the graph~$G(U)$ is a cutpoint of the graph~$G$.
\end{lem}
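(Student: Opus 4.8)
The plan is to first pin down the structure of the part $U$. Since $\mathfrak S=\{a\}$ contains only the one cutset, a set is a part of $\{a\}$-decomposition exactly when its interior is a connected component of $G-a$: if $C$ denotes the connected component of $G-a$ containing $b$ (here necessarily $b\ne a$, since $a$ lies in every part and otherwise ``the part containing $b$'' would be undefined), then $U=C\cup\{a\}$ with $\I(U)=C$ and $\R(U)=\{a\}$. In particular $G(U)$ is connected, because $G(C)$ is connected and $a$ has a neighbour in $C$; and by Lemma~\ref{tbound1} the boundary vertex $a$ separates $\I(U)=C$ from $V(G)\setminus U$, so no vertex of $C$ is adjacent to a vertex outside $U$. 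This last separation property is the tool I would use throughout.

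For statement~1, since $b$ is a cutpoint of $G$ the graph $G-b$ has a connected component $D$ with $a\notin D$. The first step is to show $D\subseteq C$: the set $D\cup\{b\}$ is connected in $G$ (as $b$ has a neighbour in $D$) and avoids $a$, hence lies in the component $C$ of $G-a$ that contains $b$. Thus $D\subseteq U\setminus\{b\}$, while $a\in U\setminus\{b\}$ lies in a different component of $G-b$. Because $G(U)-b$ is a subgraph of $G-b$, no path in $G(U)-b$ can join $D$ to $a$, so $G(U)-b$ is disconnected and $b$ is a cutpoint of $G(U)$.

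For statement~2, let $c$ be a cutpoint of $G(U)$. First note $c\ne a$, since $G(U)-a=G(C)$ is connected. Hence $c\in C$, and $a$ lies in exactly one component $E_1$ of $G(U)-c$; I would pick another component $E_2$ and a vertex $e\in E_2$, observing that $E_2\subseteq C\setminus\{c\}$. The heart of the argument is to show $G-c$ is disconnected by confining connecting paths to $U$: suppose a simple path $P=e=v_0,v_1,\dots,v_m=a$ in $G-c$ joins $e$ to $a$. If $P$ ever leaves $U$, consider the first edge $v_{j-1}v_j$ with $v_{j-1}\in U$ and $v_j\notin U$; by Lemma~\ref{tbound1}(1) an inner vertex of $U$ is not adjacent to any vertex outside $U$, which forces $v_{j-1}=a$. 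But $a=v_m$ is the endpoint of the simple path $P$, so $v_{j-1}=a$ gives $j-1=m$, contradicting $j\le m$ and $v_m=a\in U$. Hence $P$ stays inside $U$, yielding a path in $G(U)-c$ from $e\in E_2$ to $a\in E_1$, which is impossible. Therefore $G-c$ is disconnected and $c$ is a cutpoint of $G$.

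I expect the main obstacle to be statement~2, specifically the step confining a path in $G-c$ to the set $U$; everything there hinges on the separation property $\R(U)=\{a\}$ supplied by Lemma~\ref{tbound1}, which prevents the interior of $U$ from reaching the rest of the graph without passing through $a$.
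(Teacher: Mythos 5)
Your proof is correct. Note, however, that the paper does not actually prove Lemma~\ref{b1}: it states it as a classical fact about blocks and cutpoints and refers the reader to Harary's book, so there is no in-paper argument to compare yours against. Your write-up is a sound, self-contained verification recast in the paper's own language of parts: the identification $U=C\cup\{a\}$ with $\R(U)=\{a\}$ is exactly the paper's description of decomposition by a single cutset, and the key step in statement~2 --- confining a simple $e$--$a$ path in $G-c$ to $U$ via Lemma~\ref{tbound1}(1), since the only boundary vertex $a$ is the path's endpoint --- is airtight. The only implicit hypothesis you rely on, $a\ne b$, is correctly justified by the observation that otherwise ``the part containing $b$'' would not be well defined.
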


\subsection{Main results}

Pairs of dependent $k$-cutsets do not allow to construct  a tree similar to the tree of blocks and cutpoints, that describes the structure  of disposition of cutsets from~$\mathfrak R_k(G)$ and parts of $\P(G,\mathfrak S)$. However, in what follows we construct such structure for any subset of~$\mathfrak R_k(G)$ that consists of pairwise independent cutsets. A particular case of this construction is the tree of decomposition of a biconnected graph.

The following results will show analogy between classic blocks of a connected graph and parts of decomposition of a biconnected graphs.

A disconnected graph is planar if and only if every  subgraph induced on its connected components is planar. Clearly, a connected graph is planar if and only if every its block is planar. 
In 1937 Maclane~\cite{Mac} has studied the process of splitting the graph into {\it atoms} and has shown with the help of Kuratowski's theorem that  a biconnected graph is planar if and only if every its atom is planar. We will show the connection between parts of a biconnected graph and its atoms and reformulate the MacLane's theorem in our terms. 

Clearly, the chromatic number of a connected graph is equal to the maximum of chromatic numbers of its biconnected blocks. In the section~\ref{chr}, we prove some upper bounds on the chromatic number of a biconnected graph in terms of chromatic numbers of  subgraphs induced on the parts of decomposition of this biconnected graph.

The notion of {\em list colorings} appears  not long ago. Now list colorings of  are popular object of research in graph theory.
Let a {\em list} $L(v)$ of $k$ colors corresponds to each vertex $v\in V(G)$. {\em List coloring} of vertices of $G$ is a proper coloring, such that each vertex  $v\in V(G)$ is colored with a color from its list $L(v)$. The minimal positive integer $k$, such that there is a list coloring of a graph $G$ for  any set of lists of  $k$ colors is called the {\em choice number}  of the graph $G$ and denoted by  $\ch(G)$. Clearly, $\ch(G)\ge \chi(G)$.  For a biconnected graph $G$ we will prove a bound on $\ch(G)$ with the help of its  tree of decomposition.

At the end of the paper we will study critical biconnected graphs.

\goodbreak
\begin{defin}
 A $k$-connected graph~$G$ with  $v(G)\ge k+2$  is called {\it critical}, if for any vertex $x\in V(G)$ the graph $G-x$ is not  $k$-connected.
\end{defin}

Critical  $k$-connected graphs were studied in~\cite{CKL, Ham}. It was proved in~\cite{Ham} that a critical  $k$-connected graph must have at least two vetrices of degree less than~${3k-1\over 2}$.  For biconnected graphs that means the existence of at least two vertices of degree 2. 
With the help of our construction we will prove that a critical biconnected graph on at least 4 vertices must have at least 4 vertices of degree~2. Note, that one can easily prove this fact with the help of Tutte's construction~\cite{T}, but it was not  done. 
Moreover, we will describe the structure of all critical biconnected graphs that have exactly four vertices of degree~2.

\section{The tree of decomposition}

\begin{defin}
Let  $G$  be a $k$-connected graph and $\mathfrak S\subset \mathfrak R_k(G)$ be such that  cutsets of~$\mathfrak S$ are pairwise independent.

1) We  construct the {\em tree of decomposition} $T(G,\mathfrak S)$ in the following way. Vertices of one part of~$T(G,\mathfrak S)$ are the cutsets of~$\mathfrak S$ and  vertices of the other part are the parts of~$\P(\mathfrak S)$. We  denote the vertices of~$T(G,\mathfrak S)$   as the correspondent sets of vertices of the graph~$G$.  The vertices~$S\in \mathfrak S$ and $A\in \P(\mathfrak S)$ are adjacent in $T(G,\mathfrak S)$ if and only if $S\subset A$.

2) We  construct the graph ~$G^{\mathfrak S}$ on the vertex set~$V(G)$ in the following way: we take the  graph~$G$ and for each cutset~$S\in\mathfrak S$  we add all edges that connect pairs of vertices of the set~$S$. 
\end{defin}

The construction of~$T(G,\mathfrak S)$  is similar to the construction of the tree of blocks and cutpoints. The properties of these two trees are also similar.

\begin{thm}
\label{tgs}
Let  $G$  be a $k$-connected graph and $\mathfrak S\subset \mathfrak R_k(G)$ be such that  cutsets of~$\mathfrak S$ are pairwise independent.
Then the following statements hold.

$1)$ $T(G,\mathfrak S)$ is a tree.

$2)$  Let~$S\in \mathfrak S$. Then $d_{T(G,\mathfrak S)}(S)=|\P(S)|$.  Moreover, for each part~$A\in \P(S)$ there is a unique part~$B\in \P(\mathfrak S)$, such that $B\subset A$ and  $B$ is adjacent to~$S$ in~$T(G,\mathfrak S)$. Every leaf of the tree~$T(G,\mathfrak S)$ corresponds to a part of~$\P(\mathfrak S)$.

$3)$ Let~$S\in \mathfrak S$, $B,B'\in \P(\mathfrak S)$. Then the cutset~$S$ separates $B$ from $B'$   in the graph $G$ if and only if  $S$ separates~$B$ from~$B'$ in the tree $T(G,\mathfrak S)$.
\end{thm}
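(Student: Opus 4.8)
The plan is to obtain statements $(1)$ and $(2)$ simultaneously by induction on $|\mathfrak S|$, and then to read off $(3)$ from the local structure at a single cutset. For $\mathfrak S=\varnothing$ we have $\P(\varnothing)=\{V(G)\}$ and $T$ is a single vertex, a tree; this is the base. For the step, fix $S\in\mathfrak S$, put $\mathfrak S'=\mathfrak S\setminus\{S\}$, and apply the induction hypothesis to the (still pairwise independent) family $\mathfrak S'$, so that $T(G,\mathfrak S')$ is a tree satisfying the degree formula.

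The first key point is that $S$ lies in a unique part $A^{*}\in\P(\mathfrak S')$. Existence holds because $S$, being independent of every cutset of $\mathfrak S'$, is split by none of them, hence is contained in some part; uniqueness is the crucial separation computation. Suppose $S\subseteq A_{1}$ and $S\subseteq A_{2}$ with $A_{1}\neq A_{2}$ (say $A_{2}\not\subseteq A_{1}$), and pick $b\in A_{2}\setminus A_{1}$; by the definition of a part some $R\in\mathfrak S'$ separates $b$ from $A_{1}$. Since $S\subseteq A_{2}$ and $A_{2}$ is not split by $R$, the vertex $b$ lies in the same component of $G-R$ as $S\setminus R$; but $S\setminus R\subseteq A_{1}\setminus R$, so $b$ is not separated from $A_{1}$ unless $S\setminus R=\varnothing$, i.e. $S\subseteq R$. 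As $|S|=|R|=k$ this forces $S=R\in\mathfrak S'$, contradicting $S\notin\mathfrak S'$. Hence $A^{*}$ is unique.

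Passing from $\mathfrak S'$ to $\mathfrak S$ then only refines $A^{*}$: every part of $\P(\mathfrak S')$ other than $A^{*}$ survives, while $A^{*}$ breaks into the sub-parts $S\cup(A^{*}\cap C)$, one for each component $C$ of $G-S$; these are exactly the parts of $\P(\mathfrak S)$ containing $S$, and the map sending such a part to its component is a bijection onto $\P(S)$. In $T$ this replaces the single vertex $A^{*}$ by a star centred at the new vertex $S$ whose other endpoints are the sub-parts. Each cutset $R\in\mathfrak S'$ formerly adjacent to $A^{*}$ satisfies $R\subseteq A^{*}$ and, being independent of $S$, is not split by $S$, so $R\setminus S$ lies in a single component $C$ and $R$ reattaches to the unique sub-part $S\cup(A^{*}\cap C)$. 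Replacing one vertex of a tree by such a star and reattaching each incident edge to exactly one new endpoint keeps the graph a tree; this gives $(1)$. Reading off the star, $d_{T}(S)=|\P(S)|$, the sub-part in the direction of $C$ is the unique part adjacent to $S$ inside the corresponding $A\in\P(S)$, and the degrees of the old cutsets are unchanged; since every cutset has degree $|\P(S)|\ge 2$ (the graph $G-S$ being disconnected), no cutset is a leaf, so every leaf is a part. This is $(2)$.

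For $(3)$, by $(1)$ the cutset $S$ separates $B$ from $B'$ in $T$ exactly when $S$ lies on the path from $B$ to $B'$, i.e. $B$ and $B'$ sit in different branches at $S$; by the bijection of $(2)$ these branches correspond to distinct components $C\neq C'$ of $G-S$ with $B\setminus S\subseteq C$ and $B'\setminus S\subseteq C'$. Since any common vertex of $B$ and $B'$ outside $S$ would lie in $C\cap C'=\varnothing$, we have $B\cap B'\subseteq S$, and therefore in $G-S$ the vertices of $B$ lie in $C$ and those of $B'$ in $C'$; thus $S$ separates $B$ from $B'$ in $G$. The converse is the same statement read backwards. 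The main obstacle is the refinement claim used in the inductive step --- that adding one independent cutset $S$ splits precisely the single part $A^{*}$ and that $A^{*}$ meets every component of $G-S$, so that the sub-parts number exactly $|\P(S)|$. This is exactly where $k$-connectivity (each vertex of $S$ has neighbours in every component of $G-S$) and pairwise independence (so $S$ is confined to one part and each remaining cutset to one sub-part) are indispensable, and I would establish it from the properties of parts of decomposition proved in~\cite{k02}.
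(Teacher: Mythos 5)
Your induction is genuinely different from the paper's: you keep the graph $G$ fixed and delete one cutset from the family, analysing how $T(G,\mathfrak S')$ refines into $T(G,\mathfrak S)$, whereas the paper fixes one cutset $S$, splits $G$ into the $k$-connected graphs $G_i=G^{\mathfrak S}(A_i)$ for $A_i\in\P(S)$, distributes the remaining cutsets among them, and glues the trees $T(G_i,\mathfrak S_i)$ at the new vertex $S$. Your uniqueness argument for the part $A^{*}\supseteq S$ is correct, and the derivation of statement (3) from the star structure is fine. But there is a genuine gap, and you name it yourself: the ``refinement claim'' --- that $S$ splits no part of $\P(\mathfrak S')$ other than $A^{*}$, that $A^{*}$ meets every component of $G-S$, and that the sets $S\cup(A^{*}\cap C)$ are exactly the parts of $\P(\mathfrak S)$ containing $S$ --- is asserted and deferred to the literature rather than proved. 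This claim is the entire content of the induction step; without it statements (1) and (2) do not follow. In particular, to see that a part $B\neq A^{*}$ survives you must rule out that the $k$-set $S$, which by your uniqueness argument is \emph{not} contained in $B$, splits $B$; this is not a formal consequence of pairwise independence of the cutsets and genuinely uses $k$-connectivity.

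The paper closes exactly this gap with Lemma~\ref{lgs}: for any $\mathfrak T\subset\mathfrak S$ and $B\in\P(G;\mathfrak T)$, every cutset of $G^{\mathfrak S}(B)$ is a cutset of $G$ (via the shortcut-path argument using the added edges inside cutsets), hence $G^{\mathfrak S}(B)$ is $k$-connected; consequently a cutset of $\mathfrak S$ not contained in $B$ would induce a cutset of fewer than $k$ vertices in $G^{\mathfrak S}(B)$ and therefore cannot split $B$. Applied with $\mathfrak T=\mathfrak S'$ this is precisely your missing claim about the survival of the parts $B\neq A^{*}$; applied inside the parts of $\P(S)$ it also yields that the parts of $\P(\mathfrak S)$ containing $S$ are in bijection with $\P(S)$. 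If you prove this lemma (or an equivalent statement from~\cite{k02}) explicitly, your version of the induction goes through; as written, the heart of the argument is missing.
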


Before the proof of the theorem we prove some properties of the graph~$G^{\mathfrak S}$.

\begin{lem}
 \label{lgs} Let  $G$  be a $k$-connected graph and $\mathfrak S\subset \mathfrak R_k(G)$ be such that  cutsets of~$\mathfrak S$ are pairwise independent.  Then the following statements hold.

$1)$ $\mathfrak S \subset \mathfrak R_k(G^{\mathfrak S})$. Moreover, $\P(G;\mathfrak S)=\P(G^{\mathfrak S};\mathfrak S)$.

$2)$  Let $\mathfrak T \subset \mathfrak S$,  $B\in \P(G;\mathfrak T)$ and $R\in \mathfrak R(G^{\mathfrak S}(B))$. Then $R\in \mathfrak R(G)$.  In particular, the graph~$G^{\mathfrak S}(B)$ is $k$-connected.
\end{lem}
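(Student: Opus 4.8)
The plan is to compare the connected components of $G-S$ and $G^{\mathfrak S}-S$ for the cutsets $S\in\mathfrak S$, and to reduce everything to a single structural fact about how the parts attach to one another.

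For part 1, fix $S\in\mathfrak S$ and I claim that $G-S$ and $G^{\mathfrak S}-S$ have exactly the same connected components. Since $E(G)\subseteq E(G^{\mathfrak S})$, every component of $G-S$ lies inside a component of $G^{\mathfrak S}-S$, so it suffices to check that the added edges merge nothing. An added edge $xy$ with $x,y\notin S$ comes from the clique placed on some $S'\in\mathfrak S$, so $x,y\in S'\setminus S$; as $S$ and $S'$ are independent, $S$ does not split $S'$, hence $S'\setminus S$ lies in one component of $G-S$ and $x,y$ are already joined there. Thus the components coincide; in particular $G^{\mathfrak S}-S$ is disconnected and $|S|=k$, so $S\in\mathfrak R_k(G^{\mathfrak S})$. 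Because the relations ``$S$ splits $X$'' and ``$S$ separates $X$ from $Y$'' depend only on the components of $G-S$, and these agree for $G$ and $G^{\mathfrak S}$ for every $S\in\mathfrak S$, the defining conditions for a part of $\mathfrak S$-decomposition are identical in both graphs, whence $\P(G;\mathfrak S)=\P(G^{\mathfrak S};\mathfrak S)$.

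For part 2 I would first pass to $H:=G^{\mathfrak S}$. Since $E(G)\subseteq E(H)$, deleting the same set $R$ can only disconnect $G$ further, so $\mathfrak R(H)\subseteq\mathfrak R(G)$; hence it is enough to prove that a cutset $R\in\mathfrak R(H(B))$ is a cutset of the whole graph $H$. (By part 1 the components of $G-T$ and $H-T$ agree for every $T\in\mathfrak T\subseteq\mathfrak S$, so $B\in\P(H;\mathfrak T)$ as well.) The key is the following \emph{attachment lemma}: for every connected component $\mathcal D$ of $H-B$ there is a single cutset $T^*\in\mathfrak T$ that contains all vertices of $B$ having a neighbour in $\mathcal D$. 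Granting this, the gluing step is routine: let $W_1,\dots,W_p$ ($p\ge2$) be the components of $H(B)-R$; since $T^*\in\mathfrak S$ induces a clique of $H$, the neighbours of $\mathcal D$ inside $B\setminus R$ form a clique of $H(B)-R$ and so lie in one $W_i$, i.e.\ each component of $H-B$ attaches to exactly one $W_i$. As distinct components of $H-B$ are non-adjacent, no edge of $H-R$ joins different $W_i$, so $H-R$ is disconnected and $R\in\mathfrak R(H)\subseteq\mathfrak R(G)$. The $k$-connectivity of $H(B)$ then follows, since every cutset of $H(B)$ is a cutset of $G$ and hence has at least $k$ vertices, while one checks $v\big(H(B)\big)>k$.

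The attachment lemma is the step I expect to be the main obstacle, precisely because a cutset $T\in\mathfrak T$ need not lie entirely inside $B$: it may have vertices outside $B$ (a ``straddling'' cutset), and these may belong to $\mathcal D$. To prove it, assuming $\I(B)\neq\varnothing$, I would look at all $T\in\mathfrak T$ separating some vertex of $\mathcal D$ from $B$ and, for each, at the component $C_B(T)$ of $H-T$ containing $\I(B)$ (so $B\setminus T\subseteq C_B(T)$), and choose $T^*$ with $C_B(T^*)$ inclusion-minimal. The first thing I must establish from pairwise independence is that these components $C_B(T)$ form a laminar (nested) family — this is where ``independent cutsets do not cross'' is used — and then minimality forces $V(\mathcal D)\cap C_B(T^*)=\varnothing$. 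Finally, if some vertex $b\in B\setminus T^*$ had a neighbour $\delta\in\mathcal D$, then $\delta\in T^*$ (otherwise the edge $b\delta$ would cross the cut $T^*$); taking $\hat T\in\mathfrak T$ separating $\delta$ from $B$ with $\delta\notin\hat T$, nesting gives $C_B(T^*)\subseteq C_B(\hat T)$, so $b\in C_B(\hat T)$ while $\delta$ is in a far component of $H-\hat T$ and $\delta\notin\hat T$, contradicting that $b\delta$ is an edge. Hence every such $b$ lies in $T^*$. The two places needing genuine care are this laminarity of $\{C_B(T)\}$ and the final edge-crossing analysis; the degenerate cases $\I(B)=\varnothing$ and $B\subseteq T$ I would dispose of separately.
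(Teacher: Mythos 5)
Your part~1 coincides with the paper's own argument and is correct. For part~2, however, your route through the ``attachment lemma'' has a genuine gap at exactly the point you flag. The laminarity of the family $\{C_B(T)\}$ does \emph{not} follow from pairwise independence of the cutsets, which is the only ingredient you cite for it. Take $G=C_8$ with vertices $v_1,\dots,v_8$ in cyclic order and $\mathfrak T=\{T_1,T_2\}$, $T_1=\{v_1,v_3\}$, $T_2=\{v_5,v_7\}$; these are independent, and the part $B\in\P(\mathfrak T)$ containing $v_4$ is $\{v_1,v_3,v_4,v_5,v_7,v_8\}$. Here $C_B(T_1)=\{v_4,v_5,v_6,v_7,v_8\}$ and $C_B(T_2)=\{v_8,v_1,v_2,v_3,v_4\}$ are not nested. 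In this example $T_1$ and $T_2$ are of course relevant to different components of $H-B$, so the attachment lemma itself survives; but the example shows that any proof of nestedness must exploit the fact that both cutsets separate vertices of one \emph{connected} set $\mathcal D$ from $B$ (a path inside $\mathcal D$ between the two ``far'' vertices, and where it meets $T^*$ and $\hat T$). Your sketch never uses the connectivity of $\mathcal D$ at this point, and the residual configuration --- a straddling pair with $\hat T\setminus T^*\subseteq C_B(T^*)$ and simultaneously $T^*\setminus\hat T\subseteq C_B(\hat T)$ --- is precisely the case your outline does not close. So the central lemma of your part~2 is unproven, and proving it honestly is roughly as much work as the tree theorem the lemma is meant to serve.

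For contrast, the paper's proof of part~2 avoids all of this with a five-line shortcut argument: assume $R\notin\mathfrak R(G)$, take $x,y\in B$ and a \emph{shortest} $xy$-path $P$ in $G^{\mathfrak S}-R$; if $P$ contained a vertex $z\notin B$, then the cutset $T\in\mathfrak T$ separating $z$ from $B$ would be hit by $P$ on both sides of $z$ at two vertices $a,b\in T$, and since $T$ induces a clique in $G^{\mathfrak S}$ the segment of $P$ between $a$ and $b$ could be replaced by the edge $ab$, contradicting minimality. Hence $V(P)\subset B$, so $G^{\mathfrak S}(B)-R$ is connected whenever $G-R$ is, which is the contrapositive of the claim. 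If you want to salvage your structural approach, you should either prove the attachment lemma using the connectivity of $\mathcal D$ explicitly, or simply adopt the shortest-path trick, which makes the whole component-gluing machinery unnecessary.
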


\begin{proof}
1) Consider any cutset $S\in \mathfrak S$. Since the cutsets of $\mathfrak S$ are pairwise independent, no edge of $E(G^{\mathfrak S}) \setminus E(G)$ joins inner vertices of two distinct parts of~$\P(G;S)$. Hence two vertices are separated by a cutset $T\in \mathfrak S$ in the graph~$G$ if and only if they are separated by $T$ in the graph~$G^{\mathfrak S}$. That immediately implies the statements of item 1.

\begin{figure}[!ht]
	\centering
		\includegraphics[width=0.35\columnwidth, keepaspectratio]{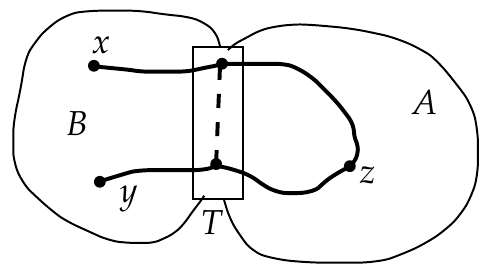}
     \caption{Construction of a  path in $G^{\mathfrak S}(B)$.}
	\label{podp}
\end{figure}

2) Assume that $R\notin \mathfrak{R}(G)$.
Let $x,y\in B$ and~$R$ do not separate $x$ from $y$  in the graph $G$, and, therefore, in $G^{\mathfrak S}$. Consider the shortest $xy$-path $P$ in the graph~$G^{\mathfrak S}-R$. Assume that $P$ contains a vertex~$z\notin B$ (see figure~\ref{podp}). There exists a cutset $T\in \mathfrak T$, that separates $z$ from $B\ni x,y$. Starting at~$z$ and going along the path~$P$ in both directions we reach two vertices $a,b \in T$, these two vertices are adjacent in~$G^{\mathfrak S}$. Hence, there exists  a path, shorter than $P$: one can replace the $ab$-section of the path $P$ by the edge $ab$.  Therefore,  $V(P)\subset B$ and $P$ is a path in~$G^{\mathfrak S}(B)-R$. This  contradicts to the condition of lemma. Hence, $R\in \mathfrak{R}(G)$.

Since $G$ is a $k$-connected graph we have $\mathfrak R_{k-1}(G)=\varnothing$. Hence $\mathfrak R_{k-1}(G^{\mathfrak S}(B))=\varnothing$ and the graph $G^{\mathfrak S}(B)$ is also $k$-connected.
\end{proof}

\renewcommand*{\proofname}{\bf Proof of the theorem~\ref{tgs}}

\begin{proof} We will prove all statements by induction on the number of cutsets in~$\mathfrak S$.  {\sf A $k$-connected graph $G$ is not fixed}.  The {\em base of induction} for empty set $\mathfrak S$ is obvious.

Let us prove the {\it induction step}. Consider the graph~$G'=G^\mathfrak S$. It follows from lemma~\ref{tgs} that the decompositions of the graphs $G$ and $G'$ by the set $\mathfrak S$ coincide, we denote this decomposition by $\P(\mathfrak S)$. Moreover, then $T(G,\mathfrak S)= T(G',\mathfrak S)$. Hence it is enough to prove all statements for the graph~$G'$. 

Let $S\in \mathfrak S$, $\P(S)=\{A_1,\dots, A_n\}$,  $G_i=G'(A_i)$. As we know, all these graphs are $k$-connected. Let the set $\mathfrak S_i$ 
consists of all cutsets of the set~$\mathfrak S$, lying in~$A_i$ and different from~$S$. Then each cutset  from $\mathfrak{S}\setminus S$ belongs to exactly one of the sets $\mathfrak S_1, \dots, \mathfrak S_n$.

\begin{figure}[!hb]
	\centering
		\includegraphics[width=0.7\columnwidth, keepaspectratio]{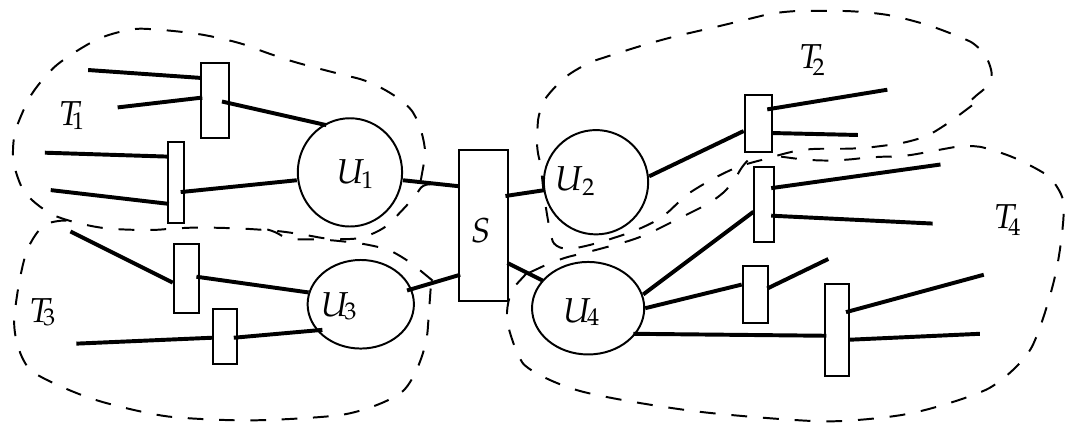}
     \caption{The tree $T(G,\mathfrak S)$.}
	\label{ptree}
\end{figure}

Let $U_i \in \P(G_i;\mathfrak S_i)$ be the part that contains $S$. For each part $U\in \P(G_i;\mathfrak S_i)$ the graph $G'(U)$ is $k$-connected by lemma~\ref{lgs}. Hence, a cutset of the set $\mathfrak S$ that is not contained in $U$ cannot split the graph~$G'(U)$.  
The cutset~$S$ lies in the part~$U_i$, but doesn't split this part since $U_i\subset A_i\in \P(G;S)$. Therefore, we have $\P(G_i;\mathfrak S_i)\subset  \P(G';\mathfrak S)$ and~$U_i$ is the only part of $\P(G_i;\mathfrak S_i)$ that contains~$S$.  Hence,
$$\P(G';\mathfrak S)= \bigcup_{i=1}^n\P(G_i;\mathfrak S_i),$$
this union is disjoint and the parts of~$\P(\mathfrak S)$ that contain the cutset~$S$ are~$U_1,\dots, U_n$. Thus the statement~2 of the theorem is proved for the cutset~$S$ and, similarly, for all other cutsets from~$\mathfrak S$.

Each part of~$\P(G_i;\mathfrak S_i)$, except~$U_i$, is adjacent in  $T_i=T(G_i,\mathfrak S_i)$ and in $T(G,\mathfrak S)$ to the same cutsets. In~$T(G,\mathfrak S)$,  for each part~$U_i$  the edge joining $U_i$ with $S$ is added. Hence $T(G,\mathfrak S)-S$ is a union of exactly  $n$ connected graphs: the graphs $T_i$ (where~$i\in \{1,\dots, n\}$, see figure~\ref{ptree}). By induction assumption all these graphs are trees, hence, the statements of items 1 and 3 of the theorem proved.
\end{proof}

\renewcommand*{\proofname}{\bf Proof}
As we see now, the properties of the tree of decomposition are similar to the well known properties of the classic tree of blocks and cutpoints.

\section{The tree of decomposition of a biconnected graph}

In what follows let $G$ be a biconnected graph. We consider cutsets of the set~$\mathfrak R_2(G)$.

\begin{defin}
The cutset~$S\in \mathfrak R_2(G)$ is called {\em single}, if it is independent with all other cutsets of the set~$\mathfrak R_2(G)$.
Denote by $\mathfrak O(G)$ the set consisting of all single cutsets of~$G$.
\end{defin}

In 1966 Tutte~\cite{T}  described the structure of relative disposition of 2-vertex cutsets in a biconnected graph with the help of a tree. This tree is quite similar to the tree of decomposition of a biconnected graph by the set of all its single cutsets. However, the sets and the tree were defined in~\cite{T} in more complicated way.

Clearly, single cutsets are pairwise independent. That allows us to write the following definition.

\begin{defin}
1) The {\em tree of decomposition}~$\B(G)$ of a biconnected graph~$G$ is the tree~$T(G,\mathfrak O(G))$.

2) We will use the notion $\P(G)$ instead of $\P(\mathfrak{O}(G))$ and call parts of this decomposition simply {\em parts of the graph~$G$}.

A part~$A\in \P(G)$ is called {\it terminal}, if it corresponds to a leaf of the graph~$\B(G)$. 
\end{defin}

\begin{rem}
 1) It follows from theorem~\ref{tgs} that~$\B(G)$ is a tree.

 2) If~$A\in \P(G)$  is a terminal part then $\R(A)$ is a single cutset of the graph~$G$.
\end{rem}

\begin{lem}
 \label{lod} Let $S$ be a single cutset of a biconnected graph~$G$ and $x\in S$.  Then the following statements hold.

$1)$ Let~$d_{\B(G)}(S)=d$.  Then $d_{G}(x)\ge d$. If $d_{G}(x)= d$, then two vertices of the cutset~$S$ are not adjacent.

$2)$ $d_{G}(x)\ge 3$.

\end{lem}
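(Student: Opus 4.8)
The plan is to translate the tree-degree $d = d_{\B(G)}(S)$ into a statement about $G$ itself and then exploit biconnectivity together with the single-ness of $S$. Write $S = \{x,y\}$. By statement~$2$ of theorem~\ref{tgs} we have $d = d_{\B(G)}(S) = |\P(S)|$, and by the discussion of a one-cutset decomposition the interiors of the parts of $\P(S)$ are precisely the connected components $C_1,\dots,C_d$ of $G-S$ (each part being such a component together with $S$). Since $S$ is a cutset, $G-S$ is disconnected, so $d\ge 2$.

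For statement~$1$ I would first show that $x$ has a neighbour in every component $C_i$. As $G$ is biconnected, $G-y$ is connected; but in $G-y$ there are no edges between distinct components $C_i$, $C_j$ (such an edge would survive in $G$ and contradict that $S$ separates $C_i$ from $C_j$), so each $C_i$ can be attached to the rest of $G-y$ only through $x$. Hence $x$ has at least one neighbour in each $C_i$, giving $d$ pairwise distinct neighbours and $d_G(x)\ge d$. If moreover $d_G(x)=d$, then $x$ has exactly one neighbour in each $C_i$ and no further neighbours; in particular $x$ is not adjacent to $y$, which is the asserted non-adjacency of the two vertices of $S$.

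For statement~$2$, by statement~$1$ it suffices to treat the case $d=2$ and to exclude $d_G(x)=2$. Suppose $d=2$ and $d_G(x)=2$. Then, by the equality case of statement~$1$, the two neighbours of $x$ are $u_1\in C_1$ and $u_2\in C_2$, one in each component; note $u_1\ne u_2$, and $y\notin\{u_1,u_2\}$ since $u_1,u_2\in C_1\cup C_2=V(G)\setminus S$. Set $R=\{u_1,u_2\}$: as the only neighbours of $x$ are $u_1,u_2$, the vertex $x$ is isolated in $G-R$, while $y\in V(G-R)$ and $y\ne x$, so $G-R$ is disconnected and $R\in\mathfrak R_2(G)$. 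I would then check that $R$ and $S$ are dependent: $R$ separates $x$ from $y$ (as $x$ is isolated in $G-R$ but $y$ is not), hence $R$ splits $S$; and $R\ne S$ because $x\in S\setminus R$. This contradicts the assumption that $S$ is single, so $d_G(x)\ge 3$.

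I expect the only real obstacle to be statement~$2$: statement~$1$ is a direct consequence of biconnectivity, whereas excluding degree~$2$ is exactly where the hypothesis that $S$ is \emph{single} enters, since a degree-$2$ vertex of $S$ manufactures a new $2$-cutset $\{u_1,u_2\}$ that is dependent with $S$. The points requiring a little care are verifying $R\ne S$ and $y\notin\{u_1,u_2\}$ (both holding because $u_1,u_2\in V(G)\setminus S$) and confirming that $G-R$ is genuinely disconnected, which follows from $x$ being isolated while $y$ remains.
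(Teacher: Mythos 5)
Your proof is correct and follows essentially the same route as the paper: for item 1, biconnectivity forces a neighbour of $x$ in the interior of each part of $\P(S)$, and for item 2, a degree-$2$ vertex $x\in S$ would make $\N_G(x)$ a $2$-cutset dependent with $S$, contradicting singleness. You merely spell out in more detail the verification (that $\N_G(x)$ is a cutset distinct from $S$ and splits it) which the paper leaves implicit.
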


\begin{proof}
1) By theorem~\ref{tgs} we have $|\P(S)|=d_{\B(G)}(S)=d$. For each part of $\P(S)$ there is a vertex adjacent to~$x$ in the interior of this part  (otherwise the graph is not biconnected). Hence $d_{G}(x)\ge d$. In the case where  $d_{G}(x)= d$ all vertices adjacent to~$x$ lie in interiors of  parts of~$\P(S)$.

2) Let $d_G(x)=2$. By item  1 then $|\P(S)|=2$ and the vertices of~$S$ are not adjacent. Hence, $\N_G(x)\in\mathfrak{R}_2(G)$ is a cutset dependent with~$S$. We obtain a contradiction.
\end{proof}

Our next aim is to study parts of a biconnected graph.

\begin{defin} For a biconnected graph~$G$ we denote by~$G'$ the graph~$G^{\mathfrak O(G)}$ (i.e. the graph obtained from~$G$ after adding all edges of type~$ab$ where $\{a,b\}\in \mathfrak O(G)$).
\end{defin}

We prove some important properties  of parts of a biconnected graph and single cutsets, that are similar to properties of blocks and cutpoints (see lemma~\ref{b1}). These properties allow us to  ``split''  a biconnected graph by a single cutset.

\begin{lem}
\label{lg'}  For a biconnected graph~$G$ the following statements hold.

$1)$   Let ~$S\in \mathfrak R_2(G)$, $a,b\in V(G)$. Then the cutset~$S$ separates~$a$ from $b$ in the graph~$G$ if and only  if~$S$ separates~$a$ from $b$ in the graph~$G'$. In particular, $\mathfrak R_2(G)=\mathfrak R_2(G')$. 

$2)$  Let~$S \in \mathfrak R_2(G)$ be not single and $S\subset A\in \P(G)$. Then  $S\in \mathfrak R_2(G'(A))$ and $S$ is not a single cutset in~$G'(A)$.
\end{lem}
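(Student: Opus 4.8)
The plan is to prove item~1 first and then use it, together with the path-shortcutting argument from the proof of Lemma~\ref{lgs}, to obtain item~2.

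For item~1 one direction is free: since $G$ is a subgraph of $G'$, if $S$ separates $a$ from $b$ in $G'$ then it does so in $G$. For the converse I would suppose $S$ separates $a$ from $b$ in $G$ and assume, for contradiction, that some edge of $E(G')\setminus E(G)$ joins two distinct components of $G-S$. Such an edge joins the two vertices of a single cutset $O\in\mathfrak O(G)$; if a vertex of $O$ lay in $S$ the edge would be deleted in $G'-S$, so both endpoints of $O$ avoid $S$ and lie in distinct components of $G-S$. Then $S$ splits $O$, which is impossible because $O$ is single and hence independent with $S$. Therefore $G-S$ and $G'-S$ induce the same partition of the original vertices into components, so $S$ separates $a$ from $b$ in $G'$ as well. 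The equality $\mathfrak R_2(G)=\mathfrak R_2(G')$ then follows, since a two-element set is a cutset of either graph exactly when it separates some pair of vertices, and the two separation relations have just been shown to coincide.

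For item~2 I would first reduce the claim $S\in\mathfrak R_2(G'(A))$ to the statement that $S$ splits $A$. Applying the shortcutting argument from the proof of Lemma~\ref{lgs}(2) with $\mathfrak T=\mathfrak S=\mathfrak O(G)$ and the part $A$, every shortest path of $G'-S$ joining two vertices of $A$ stays inside $A$; hence two vertices of $A\setminus S$ are separated by $S$ in $G'(A)$ if and only if they are separated by $S$ in $G'$, which by item~1 happens if and only if they are separated in $G$. Thus $S\in\mathfrak R_2(G'(A))$ is equivalent to $S$ splitting $A$.

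It remains to prove that a non-single $S\subset A$ splits $A$, and this is the heart of the matter. Since $S$ is not single, I would choose $T=\{t_1,t_2\}\in\mathfrak R_2(G)$ dependent with $S$; as $S$ splits $T$, the vertices $t_1,t_2\notin S$ lie in distinct components $C_1\ni t_1$ and $C_2\ni t_2$ of $G-S$. The key claim is $t_1,t_2\in A$. Suppose $t_1\notin A$; by the definition of a part there is a single cutset $O$ separating $t_1$ from $A$. Since $O$ does not split $A$, the component $W$ of $G-O$ containing $t_1$ is disjoint from $A$, hence from $S$, so $W$ stays connected in $G-S$ and $W\subset C_1$. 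Because $O$ is single it is independent with $T$, so $O$ does not split $T$; as $t_1\notin O$, either $t_2\in W$ or $t_2\in O$. In the first case $t_1,t_2\in W\subset C_1$, contradicting $C_1\ne C_2$. In the second case $2$-connectivity of $G$ forces $W$ to be adjacent to $t_2$, and since neither this neighbour in $W$ nor $t_2$ lies in $S$, the joining edge survives in $G-S$ and again places $t_1$ and $t_2$ in one component. Both cases are impossible, so $t_1\in A$, and symmetrically $t_2\in A$. Now $t_1,t_2\in A\setminus S$ lie in different components of $G-S$, so $S$ splits $A$ and $S\in\mathfrak R_2(G'(A))$. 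The same localization gives $T\subset A$, and $T$ splits $A$ because $T$ splits $S\subset A$; by the reduction above $T\in\mathfrak R_2(G'(A))$, and the shortcut argument together with item~1 shows that $S$ and $T$ still split each other inside $G'(A)$. Hence $S$ is dependent with $T$ in $G'(A)$ and is therefore not single there. The main obstacle is precisely this localization $T\subset A$, and in particular the borderline case $t_2\in O$, which is exactly where $2$-connectivity of $G$ (forcing each component of $G-O$ to touch both vertices of $O$) must be invoked.
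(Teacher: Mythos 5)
Your proof is correct, but item~2 is handled by a genuinely different argument than the paper's. For item~1 both arguments are the same: an added edge joins the two vertices of a single cutset, which no cutset of $\mathfrak R_2(G)$ can split, so the separation relation is unchanged. For item~2 the crucial point in both proofs is the localization of the dependent cutset $T=S'$ inside the part $A$; the paper gets this in two lines by invoking the biconnectivity of $G'(A)$ (already established in Lemma~\ref{lgs}): since $S'$ must split $S\subset A$ in $G'$, and one cannot disconnect the biconnected graph $G'(A)$ by removing fewer than two vertices of $A$, necessarily $S'\subset A$. You instead prove $T\subset A$ from first principles: if $t_1\notin A$, a single cutset $O$ separating $t_1$ from $A$ would, by its independence with $T$ and the biconnectivity of $G$ (needed for your borderline case $t_2\in O$), force $t_1$ and $t_2$ into one component of $G-S$, contradicting the fact that $S$ splits $T$. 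Your route is longer and requires the case analysis, but it is self-contained at the level of $G$ and the definition of a part, whereas the paper's leans on Lemma~\ref{lgs}; also, your reduction of ``$S\in\mathfrak R_2(G'(A))$'' to ``$S$ splits $A$'' via the shortcutting argument is more than is needed (only the trivial direction, that separation in $G'$ implies separation in the induced subgraph $G'(A)$, is actually used to conclude). Both proofs are valid.
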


\begin{proof} 1) While constructing  $G'$ we add edges joining pairs of vertices that form a single cutset, these pairs of vertices are not separated from each other by any cutset of~$\mathfrak R_2(G)$. This immediately implies the statements of item 1.

2) Let  $S'\in \mathfrak R_2(G)$ be a cutset dependent with~$S$. By item 1 we have~$S,S'\in \mathfrak R_2(G')$ and these two cutsets are dependent in the graph~$G'$.  Since the graph~$G'(A)$ is biconnected,  it is impossible to split the set $S\subset A$ in  $G'$ by deleting less than two vertices from the part~$A$. Hence, $S'\subset A$. Then $S$ and $S'$ split each other in the graph $G'(A)$. Therefore, $S,S'\in \mathfrak R_2(G'(A))$ and these cutsets are dependent. 
\end{proof}

The following lemma characterize non-single cutsets. A similar characterization was used by Tutte~\cite{T}.

\begin{lem}
 \label{lnod1}
Let $S=\{a,b\}\in \mathfrak R_2(G)$ be a non-single cutset. Then  ${|\P(S)|=2}$, for each part  $A\in \P(S)$ the graph $G(A)$ is not biconnected and  has a  cutpoint that separate $a$ from $b$.
\end{lem}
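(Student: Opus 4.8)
The plan is to exploit the hypothesis that $S$ is non-single by fixing a cutset $S'\in\mathfrak R_2(G)$ dependent with $S$. By the dichotomy for pairs of $2$-cutsets recalled above (proved in~\cite{Hoh,KP}), $S$ and $S'$ split each other. First I would check that $S$ and $S'$ are disjoint: if they shared a vertex, then $S'\setminus S$ would be a single vertex, trivially contained in one connected component of $G-S$, so $S$ would not split $S'$. Write $S'=\{c,d\}$. Since $S$ splits $S'$, the vertices $c$ and $d$ lie in different connected components of $G-S$; equivalently (as interiors of parts of $\P(S)$ are exactly the components of $G-S$), they lie in the interiors of two distinct parts of $\P(S)$, say $c\in\I(A_1)$ and $d\in\I(A_2)$.

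Next I would prove that $|\P(S)|=2$. Suppose to the contrary that $G-S$ has a third component, so that there is a part $A_3\in\P(S)$ with interior $C_3$ disjoint from $\{c,d\}$. Because $G$ is biconnected and $S=\{a,b\}$ is a cutset, each component of $G-S$ is adjacent to both $a$ and $b$ (otherwise one vertex of $S$ alone would separate that component from the rest, contradicting biconnectivity). Hence $a$ has a neighbour $u\in C_3$ and $b$ has a neighbour $w\in C_3$, and joining them through the connected set $C_3$ yields an $ab$-path using only vertices of $C_3\cup\{a,b\}$. This path avoids both $c$ and $d$, so $a$ and $b$ lie in one component of $G-S'$, contradicting the fact that $S'$ splits $S$. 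Therefore $\P(S)=\{A_1,A_2\}$.

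Finally I would show that, for each $i$, the graph $G(A_i)$ has a cutpoint separating $a$ from $b$; I do it for $A_1$ and the vertex $c$, the case of $A_2$ and $d$ being symmetric. Suppose $G(A_1)-c$ still contains an $ab$-path $P$. All vertices of $P$ lie in $A_1$, hence $P$ avoids $d$ as well, since $d\in\I(A_2)$ does not belong to $A_1$. Thus $P$ is an $ab$-path in $G-S'$, again contradicting that $S'$ splits $S$. Consequently $c$ (which differs from $a$ and $b$ by disjointness of $S$ and $S'$) is a cutpoint of $G(A_1)$ that separates $a$ from $b$; as $A_1\supseteq\{a,b,c\}$ has at least three vertices, the presence of a cutpoint shows that $G(A_1)$ is not biconnected.

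The technical heart of the argument is the observation that any $ab$-path confined to a single part $A_i$ automatically avoids the vertex of $S'$ lying in the other part; this is what converts the global statement ``$S'$ splits $S$'' into the local statement that a single vertex is a cutpoint of $G(A_i)$. The step requiring the most care is bounding $|\P(S)|$ by $2$, where biconnectivity must be invoked to guarantee that every component of $G-S$ carries an $ab$-path.
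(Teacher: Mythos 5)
Your proposal is correct and follows essentially the same route as the paper: fix a dependent cutset $S'$, use the mutual-splitting dichotomy to place the two vertices of $S'$ in interiors of distinct parts of $\P(S)$ (forcing $|\P(S)|=2$), and observe that any $ab$-path inside a part avoiding that part's vertex of $S'$ would avoid all of $S'$, contradicting that $S'$ separates $a$ from $b$. Your write-up merely makes explicit some steps the paper leaves implicit (the disjointness of $S$ and $S'$, and the existence of an $ab$-path through each component of $G-S$ via biconnectivity).
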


\begin{proof}
 Since~$S$ is non-single, there exists a cutset~$S'\in \mathfrak R_2(G)$ dependent with~$S$. We know that~$S'$ splits $S$. Hence, 
 any  $ab$-path in $G(A)$ intersects~$S'$.  Therefore $S'$ intersects~$\I(A)$.

Thus  $S'$ intersects the interior of each part of~$\P(S)$, hence, ${|\P(S)|=2}$. Moreover, if $\{x\}=S'\cap \I(A)$, then $x$ separates $a$ from $b$ in $G(A)$.
\end{proof}

\begin{thm}
 \label{lcycle}
Let~$G$ be a biconnected graph without single cutsets. Then either  $G$ is triconnected or~$G$ is a simple cycle.
\end{thm}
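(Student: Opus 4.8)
The plan is to argue by induction on $v(G)$, reducing $G$ by suppressing a degree-$2$ vertex that I first locate with the help of Lemma~\ref{lnod1}. First I dispose of the easy case: if $\mathfrak R_2(G)=\varnothing$, then $G$ is $2$-connected and has no cutset of at most two vertices, so if $v(G)\ge 4$ it is triconnected, while if $v(G)=3$ the only $2$-connected graph is $K_3=C_3$, a simple cycle. Hence I may assume $\mathfrak R_2(G)\ne\varnothing$ and aim to prove that $G$ is a cycle. Since $G$ has no single cutsets, every $S=\{a,b\}\in\mathfrak R_2(G)$ is non-single, so Lemma~\ref{lnod1} applies to each of them: $|\P(S)|=2$, and in each part $G(A)$ there is a cutpoint separating $a$ from $b$ (in particular such $a,b$ are non-adjacent).

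The engine of the proof is an extremal choice. Among all pairs $(S,A)$ with $S=\{a,b\}\in\mathfrak R_2(G)$ and $A\in\P(S)$ I choose one minimizing $|A|$. Let $c$ be a cutpoint of $G(A)$ separating $a$ from $b$; then $c\in\I(A)$, and $G(A)-c$ splits into the component $X\ni a$, the component $Y\ni b$, and possibly others. If $X\ne\{a\}$ then, using that interior vertices of $A$ have all their neighbours inside $A$ (Lemma~\ref{tbound1}), the pair $\{a,c\}$ is a $2$-cutset of $G$ whose part $X\cup\{c\}$ is strictly smaller than $A$, contradicting minimality; symmetrically $Y=\{b\}$. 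A third component of $G(A)-c$ would be cut off from the rest of $G$ by $c$ alone, contradicting $2$-connectivity. Hence $A=\{a,c,b\}$ and $G(A)$ is the path $a-c-b$; in particular $c$ has degree $2$ and its neighbours $a,b$ are non-adjacent and form a $2$-cutset of $G$.

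Now I suppress $c$: set $\hat G=(G-c)+ab$, a simple $2$-connected graph with $v(\hat G)=v(G)-1$, so that $G$ is a subdivision of the edge $ab$ of $\hat G$. I claim $\hat G$ is not triconnected: otherwise $G$ is a one-edge subdivision of a $3$-connected graph, and then $\{a,b\}$ has no dependent partner in $G$ --- any such partner would have to meet $\I(A)=\{c\}$, i.e.\ be of the form $\{c,z\}$, but $\hat G-z$ is $2$-connected, so $G-c-z$ stays connected --- making $\{a,b\}$ single, a contradiction. Granting for the moment that $\hat G$ also has no single cutset, the induction hypothesis forces $\hat G$ to be a cycle, and subdividing one of its edges keeps it a cycle, so $G$ is a cycle, as required.

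The main obstacle is this last claim, that suppression creates no single cutset. The point is to set up a correspondence between the $2$-cutsets of $G$ and those of $\hat G$ that preserves the splitting relation, and hence dependence: every $2$-cutset of $\hat G$ avoiding $c$ is a $2$-cutset of $G$ with the same separations (the subdivision or pendant at $c$ is irrelevant), the trivial cut $\{a,b\}$ of $G$ disappears in $\hat G$ and cannot split any cutset of $\hat G$ (it would have to separate off $c$), and the delicate case is a would-be dependent partner of the form $\{c,z\}$, which I must rule out using that $c$ has degree $2$ with neighbours exactly $a,b$. Checking that these cases are exhaustive and genuinely transport dependence back to $\hat G$ is the technical heart of the argument; everything else is the short extremal computation above.
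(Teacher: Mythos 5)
Your overall strategy --- induction on $v(G)$ via suppressing a degree-$2$ vertex located by an extremal choice of a smallest part --- is genuinely different from the paper's argument, and its skeleton is sound: the extremal computation showing that a smallest part $A$ of a non-single cutset has the form $\{a,c,b\}$ with $d_G(c)=2$ and $ab\notin E(G)$ is correct, as is the observation that $\hat G$ cannot be triconnected (a dependent partner of $\{a,b\}$ would have to be of the form $\{c,z\}$, and $G-c-z$ is $\hat G-z$ minus one edge, hence connected). For comparison, the paper fixes an arbitrary non-single cutset $S=\{a,b\}$ and proves by induction on $|A|$ that $G(A)$ is a simple $ab$-path for every $A\in\P(S)$, using Lemma~\ref{lnod1} to produce a cutpoint $x$ of $G(A)$ and recursing into the smaller parts $U_a\cup\{x\}$ and $U_b\cup\{x\}$; since $|\P(S)|=2$, the two paths close up into a cycle. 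That route never modifies the graph, so it never has to re-verify the hypothesis ``no single cutsets'' for a new graph.

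That re-verification is exactly where your proposal has a genuine gap. You write that the delicate case is a dependent partner of the form $\{c,z\}$ ``which I must rule out,'' but such partners cannot be ruled out: in $C_5$ with $c=v_2$, $a=v_1$, $b=v_3$, the cutset $\{v_1,v_4\}$ of $\hat G$ really does have $\{v_2,v_5\}=\{c,v_5\}$ as a dependent partner in $G$. What must actually be proved is a replacement statement: whenever $T\in\mathfrak R_2(\hat G)$ has a partner $\{c,z\}$ in $G$, then $\{a,z\}$ or $\{b,z\}$ is a cutset of $\hat G$ dependent with $T$ there. This is true --- since $\N_G(c)=\{a,b\}$, every $G$-path through $c$ uses both edges $ca$ and $cb$, so every path of $\hat G$ joining the two vertices of $T$ either meets $z$ or uses the edge $ab$, whence $\{z,a\}$ (or $\{z,b\}$ when $a\in T$) separates them, and one checks it is split by $T$ --- but this case analysis is the entire content of the reduction and is absent from your write-up; you explicitly defer it as ``the technical heart.'' Until it is carried out, together with the supporting verifications that $\mathfrak R_2(\hat G)=\mathfrak R_2(G)\setminus(\{\{a,b\}\}\cup\{T: c\in T\})$ with the same separations on $V(\hat G)$, and that $\{a,b\}$ can never serve as a dependent partner of a member of $\mathfrak R_2(\hat G)$ because splitting $\{a,b\}$ forces $c$ into the splitting cutset, the proof is incomplete.
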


\begin{rem}
 \label{rcycle}
1) Recall, that a triconnected graph contains at least 4 vertices. In particular, a triangle is not a triconnected graph. Hence, two alternatives of the theorem~\ref{lcycle} are mutually exclusive.

2) The statement of this theorem is a consequence of the results  proved in~\cite{k06} for arbitrary~$k$.  However, we give a simple  proof specially for this theorem. 
\end{rem}

\renewcommand*{\proofname}{\bf Proof of the theorem~\ref{lcycle}}
\begin{proof}
Assume that the graph~$G$ is not triconnected. {\it For each cutset $S=\{a,b\}\in\mathfrak R_2(G)$ and part $A\in \P(S)$ we prove, that $G(A)$ is a simple $ab$-path.}

\begin{figure}[!hb]
	\centering
		\includegraphics[width=0.6\columnwidth, keepaspectratio]{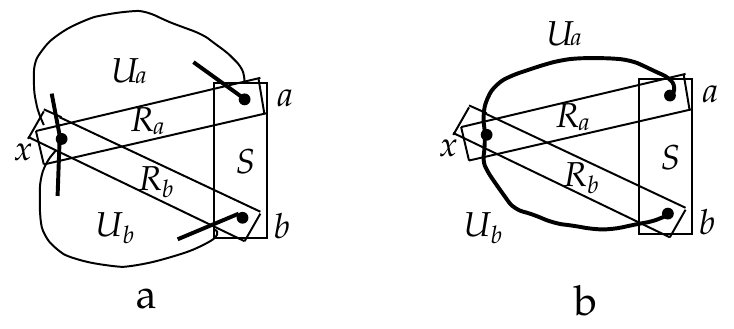}
     \caption{A biconnected graph without single cutsets.}
	\label{pnod}
\end{figure} 

The proof will be induction on $|A|$. The base of induction for the case where the part  $A$ has exactly one inner vertex is obvious. 

The {\it induction step}.  Let the statement be proved for any part less than $A\in \P(S)$. Let $H=G(A)$. Since the cutset~$S$ is non-single by lemma~\ref{lnod1} the graph~$H$ has a cutpoint~$x$ separating  $a$ from $b$. Let~$U_a$ and $U_b$ be connected components of the graph $H-x$, that contain~$a$ and $b$, respectively (see figure~\ref{pnod}a). Since $G$ is biconnected there is no other component in~$H-x$  (any such component would be a connected component in the graph~$G-x$ that is impossible).

Let $U'_a=U_a\setminus \{a\}\ne \varnothing$. Then   $R_a=\{a,x\}$ separates $U'_a$ from other vertices in the graph $G$.  Thus, by induction assumption the graph $G(U'_a\cup R_a)=G(U_a\cup \{x\})$  is a simple  $ax$-path. If~$U_a=\{a\}$, then $\N_H(a)=\{x\}$ and $G(U_a\cup \{x\})$is also a simple $ax$-path.
 Similarly,  $G(U_b\cup \{x\})$ is a simple $bx$-path. Hence the graph   $G(A)$ is a simple  $ab$-path (see figure~\ref{pnod}b). 

\smallskip
Let us finish the proof of the theorem. Let $S=\{a,b\}\in \mathfrak R_2(G)$. 
By lemma~\ref{lnod1} we know, that $\P(S)=\{A_1,A_2\}$.  We have proved that both graphs $G(A_1)$ and $G(A_2)$ are simple $ab$-paths. Hence  $G$ is a simple cycle.
\end{proof}

\renewcommand*{\proofname}{\bf Proof}

\begin{cor}
 \label{cbp}
For each part~$A\in \P(G)$ either the graph~$G'(A)$  is triconnected, or it is a simple cycle.
\end{cor}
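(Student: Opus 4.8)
The plan is to reduce the statement to Theorem~\ref{lcycle}: I would show that $G'(A)$ is biconnected and has no single cutset, after which Theorem~\ref{lcycle} immediately gives that $G'(A)$ is triconnected or a simple cycle.

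First I would establish biconnectivity and a description of the cutsets of $G'(A)$. Applying Lemma~\ref{lgs} with $\mathfrak S=\mathfrak T=\mathfrak O(G)$ and $B=A\in\P(G)=\P(G;\mathfrak O(G))$, one gets that $G'(A)=G^{\mathfrak O(G)}(A)$ is $2$-connected, that is, biconnected. The same lemma also tells us that every cutset $R\in\mathfrak R(G'(A))$ is a cutset of $G$; in particular every $T\in\mathfrak R_2(G'(A))$ satisfies $T\in\mathfrak R(G)$, and since $G$ is biconnected (so $\mathfrak R_1(G)=\varnothing$) we have $T\in\mathfrak R_2(G)$. Of course $T\subset A$, as $G'(A)$ has vertex set $A$.

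The main work is to prove that $G'(A)$ has no single cutset, i.e. that every $T\in\mathfrak R_2(G'(A))$ is non-single in $G'(A)$. I would split the $2$-cutsets $T\in\mathfrak R_2(G)$ with $T\subset A$ according to whether they are single in $G$. If $T$ is non-single in $G$, then Lemma~\ref{lg'} gives at once that $T\in\mathfrak R_2(G'(A))$ and that $T$ is not single in $G'(A)$, which is exactly what we want. So it remains to rule out that a single cutset $T\subset A$ can be a cutset of $G'(A)$. Here I use that $A$ is a part of the $\mathfrak O(G)$-decomposition: by definition no cutset of $\mathfrak O(G)$ splits $A$, so the single cutset $T$ does not split $A$, hence $A\setminus T$ lies in a single connected component of $G-T$, and therefore of $G'-T$ (splitting of $A$ is the same in $G$ and $G'$ by Lemma~\ref{lgs} and Lemma~\ref{lg'}). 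Consequently no two vertices of $A\setminus T$ are separated by $T$ in $G'$, and applying the path-confinement argument from the proof of Lemma~\ref{lgs} (a shortest $G'-T$ path between two vertices of $A$ can be shortcut, using the edges added for single cutsets, so as to remain inside $A$), any two vertices of $A\setminus T$ are joined by a path in $G'(A)-T$. Thus $G'(A)-T$ is connected, so $T\notin\mathfrak R_2(G'(A))$, a contradiction.

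Combining these observations, the $2$-cutsets of $G'(A)$ are precisely the non-single cutsets of $G$ contained in $A$, and each of them is non-single in $G'(A)$; hence $G'(A)$ has no single cutset. Being biconnected, $G'(A)$ then falls under Theorem~\ref{lcycle} and is triconnected or a simple cycle. I expect the delicate point to be the path-confinement step ruling out single cutsets as cutsets of $G'(A)$: one must check that every detour of the path outside $A$ can be shortened across an edge coming from a single cutset, which is exactly the mechanism already used in the proof of Lemma~\ref{lgs}.
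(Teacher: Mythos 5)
Your proposal is correct and follows essentially the same route as the paper: biconnectivity of $G'(A)$ and the transfer of its cutsets to $G$ via Lemma~\ref{lgs}, non-singleness via Lemma~\ref{lg'}, and then Theorem~\ref{lcycle}. The only cosmetic difference is that you argue the key step in contrapositive form (a single cutset cannot disconnect $G'(A)$, via the path-confinement argument), whereas the paper notes directly that any $2$-cutset of $G'(A)$ splits the part $A$ and hence cannot be single --- the path-confinement fact you invoke is exactly what the proof of Lemma~\ref{lgs} already establishes.
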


\begin{proof}
We know by lemma~\ref{lgs} that the graph~$G'(A)$ is biconnected. Assume that  $S\in \mathfrak R_2(G'(A))$. By lemma~\ref{lgs} we have $S\in \mathfrak R_2(G)$. The cutset~$S$ splits the part $A\in \P(\mathfrak O(G))$, hence, this cutset is non-single. By lemma~\ref{lg'} then $S$ is a non-single cutset in $G'(A)$. 
Hence,  there are no single cutsets in~$G'(A)$. Thus by theorem~\ref{lcycle} either the graph~$G'(A)$  is triconnected, or it is a simple cycle.
\end{proof}

\begin{defin}
Let ~$A \in \P(G)$. The part~$A$ is called a {\it cycle},  if~$G'(A)$ is a simple cycle. The part~$A$ is called a {\em block}, if the graph~$G'(A)$ is triconnected. If the part $A$ is a cycle, then  $|A|$ is called  the {\em length} of the cycle~$A$.
\end{defin}

{\sf Thus we know that any part of a biconnected graph~$G$ is either a cycle or a block.}

\begin{cor}
\label{cint2}
  If a part~$A\in \P(G)$ is a cycle then all vertices of its interior $\I(A)$ have degree~$2$ in the graph~$G$.
\end{cor}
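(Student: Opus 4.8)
The plan is to show that an interior vertex of $A$ has the same degree in $G$, in $G'$ and in the induced subgraph $G'(A)$, and then to read off its degree directly from the cycle structure of $G'(A)$. The whole argument is a short degree-tracking computation that exploits where the extra edges of $G'$ live.

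First I would fix a vertex $x\in \I(A)$ and recall from the definition of the interior that $x$ belongs to no single cutset of $G$. Since $G'=G^{\mathfrak O(G)}$ is obtained from $G$ by adding, for each single cutset $\{a,b\}\in\mathfrak O(G)$, only the edge $ab$ joining its two (boundary) vertices, no edge of $E(G')\setminus E(G)$ can be incident to $x$. Hence $\N_{G'}(x)=\N_G(x)$ and in particular $d_{G'}(x)=d_G(x)$.

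Next I would invoke Lemma~\ref{tbound1}$(1)$: an inner vertex of the part $A$ has no neighbour in $V(G)\setminus A$. Therefore every neighbour of $x$ in $G$—and, by the previous step, every neighbour of $x$ in $G'$—lies inside $A$. Restricting to the induced subgraph then costs nothing, so $d_{G'(A)}(x)=d_{G'}(x)=d_G(x)$. Finally, since $A$ is a cycle, $G'(A)$ is by definition a simple cycle, in which every vertex has degree exactly $2$; applying this to $x$ gives $d_{G}(x)=d_{G'(A)}(x)=2$, as required.

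The only point needing any care—and it is the crux of the argument rather than a genuine obstacle—is the bookkeeping observation that the added edges of $G'$ join boundary vertices only, so they never alter the degree of an interior vertex. Everything else follows immediately from the definitions and from Lemma~\ref{tbound1}, so I expect no real difficulty.
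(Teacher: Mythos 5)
Your proof is correct and follows essentially the same route as the paper's (the paper's one-line argument is exactly this: interior vertices have all their neighbours in $A$ by Lemma~\ref{tbound1}, no added edge of $G'$ touches them since they lie in no single cutset, and the cycle $G'(A)$ gives exactly two such edges). Your version merely spells out the degree bookkeeping that the paper leaves implicit.
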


\begin{proof}
 Let $x\in \I(A)$. Then edges can join $x$  in $G$ only to  vertices  of the part~$A$. Clearly, there are exactly two such edges.
\end{proof}

Let us study the disposition of non-single cutsets in the graph~$G$.

\begin{lem}
 \label{lnod}
$1)$ Let $A\in \P(G)$ be a cycle of length at least $4$. Then each pair of non-neighboring vertices of this cycle form a non-single cutset of the graph~$G$.

$2)$ Let  $R\in \mathfrak R_2(G)$  be a non-single cutset of the graph $G$. Then there is a part $A\in \P(G)$, such that $S\subset A$, $A$ is a cycle of length at least~$4$ and $R$ consists of two non-neighboring vertices of this cycle.
\end{lem}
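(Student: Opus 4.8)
The plan is to transfer everything to the graph $G'=G^{\mathfrak O(G)}$, where the situation becomes transparent: by Corollary~\ref{cbp} a cycle part $A$ is literally a simple cycle $G'(A)=C_m$, and by Lemma~\ref{lg'}(1) a $2$-element set separates two vertices in $G$ if and only if it does so in $G'$, with $\mathfrak R_2(G)=\mathfrak R_2(G')$. Thus it suffices to reason about cutting cycles inside $G'$, and I may freely pass between ``cutset of $G$'' and ``cutset of $G'$''.

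For item~$1$ I would first record the local structure of $G'$ around $A$ using Theorem~\ref{tgs}: the single cutsets adjacent to $A$ in $\B(G)$ are exactly the cutsets $S_1,\dots,S_d\subset A$, each of which (being added as an edge while forming $G'$) is a pair of \emph{consecutive} vertices of the cycle $G'(A)$; moreover the vertices of $V(G)\setminus A$ break into chunks $W_1,\dots,W_d$, where $W_i\cap A=S_i$ and $S_i$ separates $W_i\setminus S_i$ from $A\setminus S_i$. Given two non-neighboring cycle vertices $u,v$, deleting them splits the cycle into two nonempty arcs $P$ and $Q$. I then claim $\{u,v\}$ disconnects $G'$: each chunk $W_i$ attaches to the cycle only through the consecutive pair $S_i$, and since $u,v$ are non-adjacent we have $S_i\ne\{u,v\}$, so after deleting $\{u,v\}$ at least one endpoint of $S_i$ survives and lies in a single arc, keeping $W_i$ on one side. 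Hence $G'-\{u,v\}$, and therefore $G-\{u,v\}$, falls apart into a $P$-side and a $Q$-side, so $\{u,v\}\in\mathfrak R_2(G)$.

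To see that $\{u,v\}$ is non-single I would pick $u'\in P$ and $v'\in Q$ (possible since both arcs are nonempty). The same cutting argument gives $\{u',v'\}\in\mathfrak R_2(G)$, while by construction $u'$ and $v'$ lie on opposite sides of $\{u,v\}$; thus $\{u,v\}$ splits $\{u',v'\}$, these two cutsets are dependent, and $\{u,v\}$ is non-single. I expect the disconnection step to be the main obstacle, specifically verifying that no external chunk $W_i$ can glue the two arcs back together; the delicate case is a single cutset $S_i$ having one endpoint equal to $u$ or $v$, which is exactly the place where the hypothesis that $u,v$ are non-neighboring is needed.

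For item~$2$, let $R=\{a,b\}$ be non-single. I would first show $R$ lies inside one part. By the dichotomy recalled in the introduction, a single cutset cannot split $R$ (otherwise it would be dependent with $R$, contradicting its singleness), so no single cutset separates $a$ from $b$; taking a part $A$ containing $a$, if $b\notin A$ then $b$ would be separated from $A\ni a$ by some single cutset, a contradiction, whence $R\subset A$. Now Lemma~\ref{lg'}(2) yields $R\in\mathfrak R_2(G'(A))$ with $R$ non-single in $G'(A)$, so by Corollary~\ref{cbp} the part $A$ cannot be a block (a triconnected graph has no $2$-cutset) and is therefore a cycle $G'(A)=C_m$. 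Finally, a simple cycle admits a $2$-cutset only when $m\ge 4$, and its $2$-cutsets are precisely the pairs of non-adjacent vertices; hence $A$ is a cycle of length at least $4$ and $R$ consists of two non-neighboring vertices of $A$, as claimed.
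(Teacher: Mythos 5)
Your proposal is essentially correct and follows the same overall strategy as the paper: pass to $G'$, use Corollary~\ref{cbp} to identify $G'(A)$ with a simple cycle, and use Lemma~\ref{lg'} to move cutsets between $G$, $G'$ and $G'(A)$. For item~1 the paper is much shorter: it observes that $R$ is a cutset of the cycle $G'(A)$ and invokes Lemma~\ref{lgs}(2), which already guarantees that every cutset of $G'(A)$, for a part $A$, is a cutset of $G$; your ``chunk'' analysis of how $V(G)\setminus A$ attaches to the cycle through the consecutive pairs $S_i$ is a correct but unnecessary re-derivation of that lemma in this special case. Likewise, for non-singleness the paper only needs that $R$ splits the part $A\in\P(\mathfrak O(G))$, which by the definition of a part no single cutset can do; your explicit dependent cutset $\{u',v'\}$ also works, and you rightly check that $u'\in P$ and $v'\in Q$ are non-adjacent on the cycle. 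Your item~2 coincides with the paper's argument.

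One step deserves a concrete warning. To show $R=\{a,b\}\subset A$ for some part $A$, you take an arbitrary part $A\ni a$ and derive a contradiction from the existence of a single cutset $S$ separating $b$ from $A$. But ``$S$ separates $b$ from $A$'' does not imply ``$S$ splits $\{a,b\}$'' when $a\in S$: then $R\setminus S$ is a single vertex and $S$ splits nothing, so independence of $R$ and $S$ is not violated. This case genuinely occurs when $a$ is a boundary vertex lying in several parts, only some of which contain $b$, so the argument as written can fail for the particular $A$ you chose. The fact you need (some part contains both $a$ and $b$) is true and is exactly what the paper asserts without proof; to repair your version, choose parts $A\ni a$ and $B\ni b$ at minimal distance in $\B(G)$ and note that the single cutset adjacent to $A$ on the tree path to $B$ can contain neither $a$ nor $b$ (otherwise a closer pair of parts would exist), whence by Theorem~\ref{tgs}(3) it would split $R$ unless $A=B$.
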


\begin{proof}
 1) Let $A=\{a_1,a_2,\dots, a_k\}$ (the vertices are enumerated in the cyclic order), $R=\{a_1,a_m\}$, where $2<m<k$. Then $R\in \mathfrak R_2(G'(A))$, the cutset $R$ splits   $G'(A)$ into exactly two parts: $U_1=\{a_1,a_2,\dots, a_m\}$ and $U_2=\{a_m,a_{m+1},\dots, a_1\}$.  By lemma~\ref{lgs} we have $R\in\mathfrak R_2(G)$. Clearly, $R\notin \mathfrak O(G)$.

2) The cutset~$R$ is independent with all single cutsets of the graph~$G$, hence there is a part $A\in \P(G)$, such that $S\subset A$. By lemma~\ref{lg'} then $R\in \mathfrak R_2(G'(A))$.  It is lucid from our classification   (see corollary~\ref{cbp}) that then $A$ is a cycle of length at least~4. Now it is clear that $R$ consists of two non-neighboring vertices of this cycle.
\end{proof}

\section{Parts of decomposition and planarity}

Clearly, a connected graph is planar if and only if any its block is planar. In this setion we consider analogous planarity criterion for biconnected graphs --- in terms of parts of this graph.

\begin{defin}
 \label{subdiv}
1) A graph~$H'$ is called a {\it subdivision} of a graph~$H$, if~$H'$ can be obtained from~$H$ after  substituting  some edges by  simple paths. Added vertices of these paths are different, have degree 2 and don't belong to $V(H)$. 
{\it Main} vertices of $H'$ are vertices of the set $V(H)$.

2) 
We denote by $G\supset H$ that the graph $G$ contains a subgraph which is a subdivision of the graph~$H$.
\end{defin}

\begin{lem}
\label{l2podr} 
Let $G$ be a biconnected graph, $A\in \P(G)$. Then $G\supset G'(A)$.
\end{lem}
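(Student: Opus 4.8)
The plan is to realise $G'(A)$ as a subdivision sitting inside $G$ whose main vertices are exactly the vertices of $A$. Recall that $G'(A)$ differs from the induced subgraph $G(A)$ only in the edges $ab$ added for single cutsets $\{a,b\}\in\mathfrak O(G)$ with $\{a,b\}\subset A$ and $ab\notin E(G)$; by the definition of $\B(G)=T(G,\mathfrak O(G))$ these cutsets are precisely (a subset of) the neighbours of $A$ in the tree $\B(G)$. Every edge of $G'(A)$ that already lies in $G(A)$ I would keep as it is (a path of length one), and for every added edge $ab$ I would substitute a simple $ab$-path whose interior lies in $V(G)\setminus A$. If these substituting paths can be chosen internally disjoint and internally avoiding $A$, then the resulting subgraph of $G$ is a subdivision of $G'(A)$ with main vertices $A$, which is exactly the assertion $G\supset G'(A)$.

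To produce the path for a single cutset $S=\{a,b\}\subset A$, I would use the structure of $\P(S)$: since $S$ is a cutset we have $|\P(S)|\ge 2$, and since $S$ does not split the part $A$ (a single cutset never splits a part of $\P(\mathfrak O(G))$), there is a unique part $A_S\in\P(S)$ with $A\subset A_S$. I would pick any other part $B\in\P(S)$, $B\ne A_S$. As recalled in the excerpt, $\I(B)$ is a connected component of $G-S$ and both $a$ and $b$ have a neighbour in $\I(B)$; choosing $u,w\in\I(B)$ adjacent to $a$ and $b$ respectively and joining them by a path inside the connected set $\I(B)$, I obtain an $ab$-path $Q_S$ whose interior lies entirely in $\I(B)$. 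Since $A\subset A_S$ and $A_S\ne B$, the component $\I(B)$ meets neither $S$ nor $A\setminus S$, so the interior of $Q_S$ is disjoint from $A$.

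The heart of the argument, and the step I expect to be the main obstacle, is to show that for two different neighbours $S,S'$ of $A$ the interiors of the paths $Q_S,Q_{S'}$ are disjoint; equivalently, that every vertex $v\in V(G)\setminus A$ lies on the \emph{far side} of exactly one neighbour of $A$ (the far side of $S$ being the union of the components of $G-S$ not containing $A\setminus S$). For existence I would take any part $P\in\P(G)$ with $v\in P$ (necessarily $P\ne A$) and look at the first edge $A\,S_m$ of the path from $A$ to $P$ in $\B(G)$: by item $3$ of theorem~\ref{tgs} the cutset $S_m$ separates $A$ from $P$ in $G$, and because $S_m$ does not split $P$, all of $P\setminus S_m$, and in particular $v$, lies in one component of $G-S_m$ avoiding $A\setminus S_m$, i.e. on the far side of $S_m$. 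For uniqueness I would use the same $P$ together with any other neighbour $S_{m'}\ne S_m$: since $S_{m'}$ is not on the tree-path from $A$ to $P$, item $3$ of theorem~\ref{tgs} shows it does not separate $A$ from $P$ in $G$, so some vertex of $P\setminus S_{m'}$ lies in the same component as a vertex of $A\setminus S_{m'}$; as $S_{m'}$ does not split $P$, the whole of $P\setminus S_{m'}$, hence $v$, lies on the near side of $S_{m'}$. Consequently the far sides of distinct neighbours of $A$ are pairwise disjoint subsets of $V(G)\setminus A$, each chosen $\I(B)$ is contained in the far side of its own cutset, and therefore the paths $Q_S$ are internally disjoint and internally avoid $A$. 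This completes the construction of the required subdivision and proves $G\supset G'(A)$.
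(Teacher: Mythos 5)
Your proposal is correct and follows essentially the same route as the paper: replace each added edge $ab$ (with $\{a,b\}\in\mathfrak O(G)$) by an $ab$-path through the interior of a part of $\P(\{a,b\})$ on the side away from $A$, and use the tree structure of $\B(G)$ to see that distinct substituted paths are internally disjoint and avoid $A$. The paper dispatches the disjointness step in one line (the relevant parts are separated by $A$ in $\B(G)$, so their interiors are disjoint), whereas you spell it out via item~3 of theorem~\ref{tgs}; the content is the same.
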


\begin{proof}
Let $ab\in E(G'(A)) \setminus E(G)$. Then $a,b\in A$ and $\{a,b\}\in \mathfrak O(G)$. Let $U_{a,b}\in \P(\{a,b\})$ be the part that doesn't contain  $A$. Then there exists an $ab$-path $S_{a,b}$ in the graph $G$ which inner vertices belong to  $\I(U_{a,b}$). We substitute the edge $ab$ by the path~$S_{a,b}$.

As a result of all such substitutions we obtain a subgraph~$H$ of the graph~$G$. Let $ab$  and $xy$ be two distinct substituted edges (maybe, they have a common end). Then the parts~$U_{a,b}$ and~$U_{x,y}$ are separated by the part~$A$ in the tree~$\B(G)$, hence, they have no common inner vertex. Therefore, no two added paths has a common vertex. Thus, $H$ is a subdivision of~$G'(A)$. 
\end{proof}

The following theorem almost repeat the theorem proved by MacLane in 1937~\cite{Mac}. 

\begin{thm}
\label{t2plan} 
A biconnected graph~$G$ is planar if and only if for each block $B\in \P(G)$ the graph $G'(B)$ is planar. 
\end{thm}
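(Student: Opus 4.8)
The plan is to prove the two directions separately, treating the forward implication as essentially immediate and spending the real effort on reassembling a planar drawing of $G$ from planar drawings of its pieces. Note first that a cycle is trivially planar, so among the parts of $\P(G)$ only the blocks can obstruct planarity; this is exactly why the statement quantifies over blocks alone.

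For the \emph{only if} direction no induction is needed. Let $B\in\P(G)$ be a block. By Lemma~\ref{l2podr} we have $G\supset G'(B)$, i.e.\ $G$ contains a subgraph that is a subdivision of $G'(B)$. If $G$ is planar, then this subgraph is planar, and since a graph is planar exactly when any of its subdivisions is, $G'(B)$ is planar.

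For the \emph{if} direction I would argue by induction on $|\P(G)|$. If $\P(G)=\{V(G)\}$ there are no single cutsets, so Theorem~\ref{lcycle} forces $G$ to be a simple cycle (hence planar) or triconnected; in the latter case $V(G)$ is itself a block and $G=G'(V(G))$, which is planar by hypothesis. For the induction step, fix a single cutset $S=\{a,b\}$ and let $\P(S)=\{A_1,\dots,A_n\}$ with $n\ge 2$. Set $H_i=G'(A_i)$; by Lemma~\ref{lgs} each $H_i$ is biconnected, and by Theorem~\ref{tgs} the decomposition of $G$ refines that by $S$, so the parts of $G$ contained in $A_i$ are exactly the parts of $H_i$, with $H_i'(B)=G'(B)$ for every such part $B$. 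In particular the blocks of $H_i$ are precisely the blocks of $G$ lying inside $A_i$, the hypothesis of the theorem descends to each $H_i$, and $|\P(H_i)|<|\P(G)|$; so by the induction hypothesis every $H_i$ is planar.

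It remains to glue these planar pieces back together, and this is the \textbf{main obstacle}, being the only step that genuinely uses planarity. Since distinct parts of $\P(S)$ share only $S$ and no edge of $G'$ joins the interiors of two different parts, one has $G'=\bigcup_{i=1}^{n} G'(A_i)$, so $G'$ is obtained by identifying the graphs $H_1,\dots,H_n$ along the common edge $ab$. I would prove the gluing lemma that a union of planar graphs sharing exactly one common edge $ab$ is planar: choose for each $i$ a planar embedding of $H_i$ with $ab$ on the outer face, delete $ab$ so that $a$ and $b$ lie on the outer face, embed $H_i-ab$ in a disk $D_i$ with $a,b$ on its boundary, and then arrange the $n$ disks as petals meeting only at the points $a$ and $b$ (equivalently, as nested bigons bounded by $a$--$b$ arcs), restoring a single copy of the edge $ab$. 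This yields a planar drawing of $G'$, and since $G\subseteq G'$ the graph $G$ is planar. The care required in choosing compatible outer faces and in combining $n$ pieces without crossings is the heart of the argument; the bookkeeping identifying the blocks of $H_i$ with those of $G$ rests on Theorem~\ref{tgs} and Lemma~\ref{lgs}.
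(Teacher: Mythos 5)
Your proposal is correct, and the forward direction (Lemma~\ref{l2podr} plus the fact that a subdivision of a non-planar graph is non-planar) is exactly the natural argument. For the converse, however, you take a genuinely different route from the paper: the paper disposes of the whole theorem in one line as ``a simple consequence of Kuratowski's theorem'' --- the intended argument being that a $K_5$- or $K_{3,3}$-subdivision in $G$ can be pushed into a single part $G'(B)$ with $B$ a block, since its branch vertices cannot be separated by a single cutset without the connecting paths being replaceable by the added edge $ab$. You instead give a Kuratowski-free, constructive proof: induction on $|\P(G)|$, splitting along one single cutset $S=\{a,b\}$ and gluing planar embeddings of the pieces $G'(A_i)$ as petals along the common edge $ab$. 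This buys an actual embedding of $G$ and avoids a deep theorem, at the cost of the gluing lemma and some bookkeeping. One point in that bookkeeping deserves more than a wave at Theorem~\ref{tgs}: the induction hypothesis is applied to $H_i=G'(A_i)$ with its \emph{own} decomposition $\P(H_i,\mathfrak O(H_i))$, so you must check that $\mathfrak O(H_i)$ coincides with the set of single cutsets of $G$ lying in $A_i$ and distinct from $S$. Theorem~\ref{tgs} only identifies $\P(G';\mathfrak S)$ with $\bigcup_i\P(G_i;\mathfrak S_i)$ for the family $\mathfrak S$ fixed in advance; the equality $\mathfrak O(H_i)=\mathfrak S_i$ needs a separate (short) argument in the spirit of Lemmas~\ref{lgs} and~\ref{lg'}: every cutset of $H_i$ is a cutset of $G$, dependence transfers in both directions between $G$ and $G'(A_i)$, and $S$ itself is not a cutset of $H_i$. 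With that supplied, your proof is complete.
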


The only difference of our theorem from MacLane's one is that instead of graphs $G'(B)$ MacLane used so-called {\em atoms}, which, in fact, are subdivisions of graphs~$G'(B)$.
A proof of the theorem~\ref{t2plan} is a simple consequence of well known Kuratowski's theorem on characterization of non-planar graphs.

\section{Parts of decomposition and the chromatic number}
\label{chr}

It is clear, that the chromatic number of a connected graph is equal to the maximum of chromatic numbers of its biconnected blocks. In this section we prove some upper bounds on the chromatic number of a biconnected graph $G$ in terms of   upper bounds on the chromatic numbers of its subgraphs induced on parts of~$G$. These bounds will be easily proved with the help of the tree of decomposition.

\begin{thm}
\label{t2ch} For a biconnected graph~$G$ the following statements hold.

$1)$ 
$$ \chi(G)\le \chi(G')=\mmax_{A\in \P(G)} \chi(G'(A)).
$$

$2)$
\begin{equation}
\label{e2ch} 
\chi(G)\le \mmax_{A\in \P(G)} \chi(G(A))+1.
\end{equation}
\end{thm}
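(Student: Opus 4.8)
The plan is to exploit the tree structure of $\B(G)$ to combine colorings of the individual parts into a coloring of the whole graph, rooting the tree and propagating colorings from the root outward. Both bounds follow the same strategy: I will root the tree $\B(G)$ at an arbitrary part, and then color the parts one at a time in an order compatible with the tree (from the root downward), each time reusing the colors already fixed on the single cutset through which the current part attaches to its parent.

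For statement 1, I would first observe that $\chi(G)\le \chi(G')$ is trivial, since $G$ is a subgraph of $G'$ (adding edges can only increase the chromatic number). The substantive equality is $\chi(G')=\mmax_{A\in \P(G)}\chi(G'(A))$. The inequality $\chi(G')\ge \mmax_A \chi(G'(A))$ is immediate because each $G'(A)$ is an induced subgraph of $G'$. For the reverse inequality, let $t=\mmax_A\chi(G'(A))$. I color the parts inductively along the rooted tree: color the root part $A_0$ using $t$ colors via a proper coloring of $G'(A_0)$. When I reach a part $A$ attached to its parent through the single cutset $S=\R(A)\cap(\text{parent})\subset A$, the two vertices of $S$ already carry colors; I want to extend to a proper $t$-coloring of $G'(A)$ agreeing with these. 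This is possible because in $G'$ the two vertices of the single cutset $S$ are adjacent (we added the edge $ab$ for every single cutset when forming $G'$), so they receive distinct colors, and any proper $t$-coloring of $G'(A)$ assigns distinct colors to $S$ as well; since $t\ge 2$ and the colors are interchangeable by permutation, I can permute the colors of an arbitrary proper $t$-coloring of $G'(A)$ to match the two prescribed colors on $S$. By theorem~\ref{tgs}, part~3, distinct parts interact only through the cutsets on the tree path between them, so these local extensions are globally consistent and yield a proper $t$-coloring of $G'$.

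For statement 2, I use the same rooted-tree propagation, but now I cannot rely on the extra edges of $G'$, so I work in $G$ directly and pay one extra color. Let $t=\mmax_A\chi(G(A))$ and set the palette to $t+1$ colors. Color the root part with $\le t$ colors. At a part $A$ attached to its parent via the single cutset $S=\{a,b\}$, the vertices $a,b$ already have colors in $G$; in $G$ these two vertices need not be adjacent, so they may share a color. I take any proper $t$-coloring of $G(A)$; if it happens to give $a$ and $b$ the colors they already carry (up to a permutation of the $t$ colors) I am done, but in general I must reconcile two prescribed colors. The mechanism is: use a permutation of the $t$ colors to fix the color of $a$, and then, if $b$'s prescribed color conflicts, recolor $b$ in the chosen coloring of $G(A)$ using the reserve $(t+1)$-st color — this is safe because $b$ is a single vertex and the extra color is not used on any neighbor of $b$ inside $A$, and $b$'s only neighbors outside $A$ are handled through the tree separation of theorem~\ref{tgs}. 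Hence $t+1$ colors suffice globally.

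The main obstacle is making the reconciliation step at each cutset fully precise, namely guaranteeing that the color permutation fixing one cutset vertex does not create a conflict elsewhere, and that the boundary vertices shared with already-colored ancestor parts are exactly the two vertices of the attaching single cutset and nothing more. This is where theorem~\ref{tgs}, part~3 is essential: it certifies that once the colors on the separating cutset $S$ are fixed, the colorings on the two sides of $S$ in the tree do not interfere, so the induction along the rooted tree closes without propagating constraints beyond the single cutset currently being matched. I expect the bookkeeping — that $\R(A)$ restricted to the parent direction is precisely $S$, and that in statement~2 the one reserve color always remains available at the single vertex needing recoloring — to be the only genuinely delicate point; everything else is a routine induction on the tree.
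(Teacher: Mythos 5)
Your argument for statement 1 is correct and is essentially the paper's proof: order the parts along the tree $\B(G)$ (the paper uses levels from a root part, you root the tree --- the same thing), note that the two vertices of the attaching single cutset are adjacent in $G'$ and hence receive distinct colors both in the inherited coloring and in any proper $k$-coloring of $G'(A)$, and permute colors to match. The separation property of theorem~\ref{tgs} guarantees, exactly as you say, that the only already-colored vertices of a new part are the two vertices of the cutset joining it to its parent.

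Your reconciliation step in statement 2, however, has a genuine flaw. When the part $A$ is attached via $S=\{a,b\}$ and the prescribed colors are $i$ on $a$ and $j$ on $b$, you propose to fix $a$ by a color permutation and, if the chosen $t$-coloring of $G(A)$ disagrees with $j$ on $b$, to \emph{recolor $b$} with the reserve $(t+1)$-st color. This changes the color of a vertex that was already colored when the parent part was processed, so it is not a local operation: $b$ lies in the parent part and has neighbors there, and those neighbors were only guaranteed to avoid $b$'s old color $j$, not the reserve color. Since the reserve color is handed out repeatedly (once per problematic cutset vertex), two reserve-colored vertices can be adjacent. Concretely, take a $K_4$-block $Q=\{1,2,3,4\}$ with a long cycle-part attached across the single cutset $\{1,2\}$ and another across $\{3,4\}$; if both $2$ and $3$ get pushed to the reserve color when their respective child parts are processed, the edge $23$ is monochromatic. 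So the invariant ``everything colored so far is proper'' is not maintained. The correct move --- and what the paper does --- is never to touch already-colored vertices: if $i=j$, color $G(A)-\{a,b\}$ with the $t$ colors other than $i$; if $i\ne j$, color $G(A)-b$ with the $t$ colors other than $j$, permuted so that $a$ receives $i$. In other words, the reserve mechanism should remove $b$'s prescribed color from the palette used on the rest of $A$, rather than move $b$ itself onto a new color. With that replacement your induction closes; as written, it does not.
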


$3)$    $$ \chi(G) \le \max\biggl(3, \quad \mmax_{\mbox{\scriptsize $A$ is a block of $G$}} \chi(G(A))+1 \biggr).
$$

\begin{proof}  Divide  the tree~$\B(G)$ into levels: let level 0 consists of any part~$B\in \P(G)$,  level $\ell+1$ (where $\ell\ge 0$) consists of vertices of~$\B(G)$ that do  not belong to levels $0,\dots, \ell$ and are adjacent to at least one vertex of level 0. It is clear that even levels consist of parts of the graph $G$ and odd levels consist of single cutsets. We will color vertices of parts of $G$ in the order determined by division into levels, starting at level~0.

1) It is enough to color the vertices of $G'$ with    $$ k=\mmax_{A\in \P(G)} \chi(G'(A))$$ colors. Obviously, we can color $G'(B)$ with $k$ colors.   Let vertices of parts that belong to levels  less than $2\ell>0$ are colored. Consider a part $A\in \P(G)$ of level $2\ell$, it is adjacent in $\B(G)$ to exactly one cutset~$S$ of level $2\ell-1$. Vertices of $S$ are the only colored vertices in the part~$A$ and these two vertices have different colors, since they are adjacent in~$G'$. Clearly, there is a proper coloring of the graph~$G'(A)$ with $k$ colors. The vertices of the set $S$ have different colors in this coloring, hence we may color these two vertices  just with the colors they were colored in the coloring of previous levels. 

2) It is enough to color the vertices of  $G$ with    $$ m+1=\mmax_{A\in \P(G)} \chi(G(A))+1$$ colors. We can color the graph $G(B)$  with $m$ colors.  Let vertices of parts that belong to levels  less than $2\ell>0$ are colored. Consider a part $A\in \P(G)$ of level $2\ell$, it is adjacent in $\B(G)$ to exactly one cutset~$S$ of level $2\ell-1$. Vertices of $S=\{a,b\}$ are the only colored vertices in the part~$A$. Let $a$ and $b$ are colored with colors $i$ and $j$ (maybe $i=j$). 

If $i=j$, then we color the vertices of  $G(A)-\{a,b\}$ with  $m$ colors (without the color~$i$). If $i\ne j$,  we color vertices of $G(A)-b$ 
with  $m$ colors (without color $j$) such that  $a$  has color~$i$. In both cases we obtain a proper coloring of vertices of the part~$A$, agreed with the coloring of previous levels.

3) The only difference from item 2  in coloring of a part $A$ is in the case where $A$  is a cycle. Then two vertices of $A$ are colored before and one can easily complete the proper coloring of this cycle using three colors.
\end{proof}

\begin{rem}
 In the proof of statement 2 of theorem~\ref{t2ch} we  can start with coloring of an arbitrary part~$B$, and we need not additional color for this part. Hence, counting the maximum in formula~(\ref{e2ch}) for some part $A\in \P(G)$ we may not increase the chromatic number of the graph $G(A)$ by 1 (just this part must be chosen as~$B$). 

Similarly, in the statement 3 we may not increase by 1 one of the chromatic numbers.
\end{rem}

\begin{cor}
 \label{c2ch}
If all parts of a biconnected graph~$G$ are cycles, then ${\chi(G)\le 3}$.
\end{cor}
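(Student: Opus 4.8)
The plan is to deduce the corollary directly from statement~3 of Theorem~\ref{t2ch}. That bound reads
$$\chi(G)\le \max\Bigl(3,\ \mmax_{\mbox{\scriptsize $A$ is a block of $G$}}\chi(G(A))+1\Bigr),$$
so everything hinges on the second entry of the outer maximum, which ranges over the blocks of~$G$. By hypothesis every part of~$G$ is a cycle, and we already know (from the dichotomy ``cycle or block'' recorded after Corollary~\ref{cbp}) that every part is either a cycle or a block. Hence under our assumption the graph~$G$ has no blocks at all, the inner maximum is taken over the empty set, and the outer maximum collapses to~$3$, giving $\chi(G)\le 3$.

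The only point that deserves a word is the convention for the empty maximum. I would state explicitly that a maximum over the empty family of blocks is simply absent from the outer $\max$ (equivalently, equals $-\infty$), so that the bound of Theorem~\ref{t2ch} part~3 reduces to $\chi(G)\le\max(3,\varnothing)=3$. This is the entire combinatorial content of the argument; no new work is needed beyond invoking the already-proved theorem and reading off the case ``no blocks.''

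If one prefers to avoid the empty-maximum convention, an equivalent route is to revisit the coloring procedure in the proof of statement~3 of Theorem~\ref{t2ch}. There the parts are colored in the order given by the levels of the tree~$\B(G)$, each part consistently with the two already-colored vertices of its separating cutset, and a part that is a \emph{cycle} is completed with at most three colors regardless of which colors those two boundary vertices carry. Since under our hypothesis every part encountered is a cycle, the whole coloring uses at most three colors, which again yields $\chi(G)\le 3$.

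I do not expect any genuine obstacle here: the delicate work — treating cycles with only three colors rather than the weaker $\chi(G(A))+1$ bound of statement~2 — has already been carried out inside Theorem~\ref{t2ch}, so the corollary is essentially a specialization of that theorem to the blockless case. The only thing to get right is making the empty-maximum reading unambiguous.
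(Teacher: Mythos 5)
Your proposal is correct and follows exactly the route the paper intends: Corollary~\ref{c2ch} is a direct specialization of statement~3 of Theorem~\ref{t2ch} to the case where $G$ has no parts that are blocks, so the maximum collapses to~$3$. Your remark about handling the empty maximum (or, equivalently, re-running the level-by-level coloring where every part is a cycle) is exactly the right bookkeeping and matches the paper's argument.
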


We pass to bounds on the choice number of a biconnected graph.

\begin{thm}
\label{t2cho}  For a biconnected graph~$G$ the following statements hold.

$1)$ $$
\ch(G)\le \mmax_{A\in \P(G)} \ch(G(A))+2.
$$
\end{thm}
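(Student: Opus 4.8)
The plan is to follow the proof of statement~2 of Theorem~\ref{t2ch} almost verbatim, spending two reserved colors instead of one. Put $m=\mmax_{A\in \P(G)}\ch(G(A))$ and fix an arbitrary family of lists $L(v)$ with $|L(v)|=m+2$ for all $v\in V(G)$; I must produce a proper coloring of $G$ respecting these lists. As in Theorem~\ref{t2ch}, I divide $\B(G)$ into levels starting from a root part $B$ and color the parts in order of increasing level. The structural facts I rely on are exactly those used there: every part $A\neq B$ is adjacent in $\B(G)$ to a single cutset $S=\{a,b\}$ of the preceding level, and at the moment $A$ is processed the vertices $a,b$ are the only already colored vertices of $A$. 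The root part is colored first directly from the lists, which is possible since $\ch(G(B))\le m\le m+2$.

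The heart of the argument is the extension step. Suppose $A\in\P(G)$ is to be colored, its attaching cutset is $S=\{a,b\}$, and $a,b$ already carry colors $\alpha$ and $\beta$ (possibly $\alpha=\beta$). For every vertex $v\in A\setminus\{a,b\}$ I shrink its list to $L'(v)=L(v)\setminus\{\alpha,\beta\}$, so that $|L'(v)|\ge m$. Since deleting vertices cannot increase the choice number, $\ch\bigl(G(A)-\{a,b\}\bigr)\le\ch\bigl(G(A)\bigr)\le m$, so the lists $L'$ admit a proper coloring of $G(A)-\{a,b\}$. I color all vertices of $A\setminus\{a,b\}$ accordingly. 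Because none of these vertices receives $\alpha$ or $\beta$, keeping $a$ and $b$ with their old colors still yields a proper coloring of the whole induced subgraph $G(A)$; in particular the colors already fixed on $a,b$ are never violated. The colors thus assigned to the boundary vertices of $A$ lying in cutsets of the next level become the precoloring for the subsequent parts, exactly as in Theorem~\ref{t2ch}, and since distinct parts share only cutset vertices these choices never conflict. After all parts are processed we obtain a proper list coloring of $G$, whence $\ch(G)\le m+2$.

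The only real point to watch --- and the reason the bound is $+2$ rather than the $+1$ of Theorem~\ref{t2ch} --- is that the lists are rigid. In the chromatic setting one is free to permute the color palette globally, so after coloring a part with $m$ colors one can rename them to agree with the two already fixed boundary colors, costing only a single extra color. With prescribed lists this renaming is unavailable, so to guarantee that the two fixed colors $\alpha,\beta$ of the attaching cutset cause no conflict I simply forbid both of them on the remainder of the part; this is what forces lists of size $m+2$. The verification that removing two colors still leaves enough room, namely $|L'(v)|\ge m\ge \ch\bigl(G(A)-\{a,b\}\bigr)$, is the crux, and it uses nothing beyond the monotonicity of the choice number under deletion of vertices.
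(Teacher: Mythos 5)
Your proposal is correct and follows essentially the same route as the paper: level-by-level coloring along the tree $\B(G)$, deleting the two colors of the attaching cutset from the lists of the remaining vertices of the part, and invoking $\ch\bigl(G(A)-\{a,b\}\bigr)\le m$ to finish. The paper's argument is exactly this, stated more tersely; your explicit appeal to monotonicity of the choice number under vertex deletion is the same step the paper leaves implicit.
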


$2)$    $$ \ch(G) \le \max\biggl(3, \quad \mmax_{\mbox{\scriptsize  is a block of $G$}} \ch(G(A))+2 \biggr).
$$

\begin{proof}
1) Similarly to the proof of theorem~~\ref{t2ch} we divide vertices of the tree~$\B(G)$ into levels and  color vertices of parts of $G$ in the order determined by  levels, starting at level~0.
Let vertices of parts that belong to levels  less than $2\ell>0$ are colored. Consider a part $A\in \P(G)$ of level $2\ell$, it is adjacent in $\B(G)$ to exactly one cutset~$S$ of level $2\ell-1$. Vertices of $S=\{x,y\}$ are the only colored vertices in the part~$A$. 

Let's delete the colors of $x$ and $y$ from the lists of all other vertices of the part $A$. Clearly, the number of remaining colors in these lists is enough for proper coloring of  $G(A\setminus\{x,y\})$.

2) The  difference from item 1 in coloring of a part $A$ is in the case where $A$  is a cycle. Then two vertices of $A$ are colored before. We can easily complete the proper coloring of this cycle:  at the moment we color some vertex~$z$ of this cycle at most two its neighbors are colored and the list of $z$ contains three colors.
\end{proof}

\begin{rem}
 In the proof of statement 2 of theorem~\ref{t2cho} we  can start with coloring of an arbitrary part~$B$, and we need not two additional  colors for this part. Hence, counting the maximum in formula~(\ref{e2ch}) for some part $A\in \P(G)$ we may not increase the choice number of the graph $G(A)$ by 2 (just this part must be chosen as~$B$). 
\end{rem}

\section{Critical biconnected graphs}

The tree of decomposition will help us to study the structure of critical biconnected graphs.

\begin{thm}
 \label{t2cr} 
$1)$ A biconnected graph~$G$ is critical if and only if all its parts-blocks and parts-triangles  have empty interior.

$2)$ Let~$A\in \P(S)$  be a terminal part of a critical biconnected graph~$G$, adjacent in~$\B(G)$ to a single cutset~$S$.  Then~$A$ is a cycle with at list four vertices and all   vertices of~$A$, except two vertices of the cutset~$S$, have degree~$2$ in the graph~$G$.

$3)$ Any critical biconnected graph has at least four vertices of degree $2$.
\end{thm}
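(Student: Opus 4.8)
The plan is to reduce criticality to a purely cutset-theoretic condition and then read off all three statements from the classification of parts. Since $G$ is biconnected with $v(G)\ge 4$, for every vertex $x$ the graph $G-x$ is connected and has at least three vertices; hence $G-x$ fails to be biconnected exactly when it has a cutpoint $y$, i.e. exactly when $\{x,y\}\in\mathfrak R_2(G)$. Thus \emph{$G$ is critical if and only if every vertex of $G$ lies in some $2$-cutset.} I would first establish this equivalence and then test it vertex by vertex, using that each vertex is either a boundary vertex of some part or an interior vertex of a unique part.

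For statement~1 I would split vertices into three types. A boundary vertex lies, by definition, in a single cutset $S\in\mathfrak O(G)$, so it is always contained in a $2$-cutset. An interior vertex $x$ of a cycle part $A$ of length at least $4$ has a non-neighbour in the cyclic order, so by Lemma~\ref{lnod}(1) it lies in a non-single $2$-cutset. It remains to show that an interior vertex $x$ of a block or of a triangle lies in \emph{no} $2$-cutset: a single cutset cannot contain $x$ (it is interior), and by Lemma~\ref{lnod}(2) every non-single cutset consists of two non-adjacent vertices of some cycle of length at least $4$; since an interior vertex belongs to a unique part, $x$ would have to be such a cycle vertex, contradicting that its part is a triangle or a block. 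Combining the three cases, every vertex lies in a $2$-cutset precisely when no block-part and no triangle-part has nonempty interior, which is statement~1.

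Statement~2 then follows quickly. A terminal part $A$ has $\R(A)=S$ equal to a single cutset, so its only boundary vertices are the two vertices of $S$ and $A\setminus S=\I(A)$. If $A$ were a block or a triangle, statement~1 would force $\I(A)=\varnothing$, hence $|A|=|S|=2$, contradicting that a block-part has at least four vertices and a triangle has three. So $A$ is a cycle, and a cycle of length $3$ is excluded for the same reason, leaving a cycle of length at least $4$; by Corollary~\ref{cint2} all vertices of $\I(A)=A\setminus S$ then have degree $2$ in $G$.

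For statement~3 I would separate two cases. If $G$ has no single cutset then, being critical, it cannot be triconnected (a triconnected graph stays $2$-connected after deleting any vertex), so by Theorem~\ref{lcycle} it is a simple cycle of length $v(G)\ge 4$, which has at least four vertices of degree $2$. Otherwise $\mathfrak O(G)\ne\varnothing$, so $\B(G)$ contains a single-cutset vertex, which by Theorem~\ref{tgs}(2) has degree $|\P(S)|\ge 2$; hence $\B(G)$ is a tree on at least three vertices and so has at least two leaves, each of which is a part. Each such terminal part is, by statement~2, a cycle of length at least $4$ and therefore contains at least two interior vertices of degree $2$. Finally, interiors of distinct parts are disjoint --- two distinct parts are separated in $G$ by a single cutset on the tree-path between them (Theorem~\ref{tgs}(3)), so their common vertices lie in that cutset and cannot be interior --- so the degree-$2$ vertices coming from two different terminal parts are distinct, giving at least $2\cdot 2=4$ of them. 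The main obstacle is the negative direction of statement~1, namely verifying that no $2$-cutset passes through an interior vertex of a block or a triangle; this is exactly where the characterisation of non-single cutsets in Lemma~\ref{lnod}(2) and the uniqueness of the part containing an interior vertex are essential.
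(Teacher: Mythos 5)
Your proof is correct and follows essentially the same route as the paper: it reduces criticality to every vertex lying in a $2$-cutset, classifies vertices via Lemma~\ref{lnod} (interior vertices of blocks and triangles lie in no $2$-cutset, the rest do), and then derives statements~2 and~3 from Corollary~\ref{cint2}, Theorem~\ref{lcycle}, and the two-leaves-of-a-tree count. You merely spell out in more detail the case analysis that the paper compresses into a one-line citation of Lemma~\ref{lnod}; no substantive difference.
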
 

\begin{proof}
1) By lemma~\ref{lnod} vertices not contained in cutsets of $\mathfrak R_2(G)$ (i.e. vertices which deleting does no break biconnectivity of the graph~$G$) are exactly inner vertices of parts-blocks and parts-triangles of~$G$.

\begin{figure}[!ht]
	\centering
		\includegraphics[width=\columnwidth, keepaspectratio]{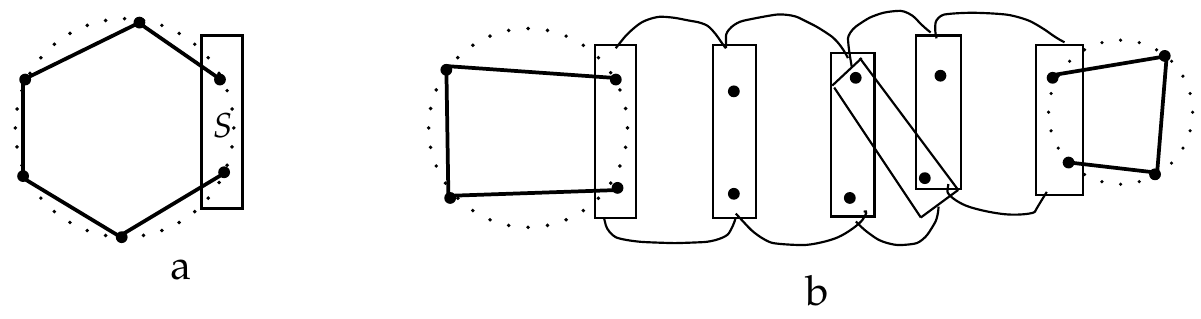}
     \caption{Critical biconnected graphs.}
	\label{part2cr}
\end{figure}

2) Let~$A$ be a terminal part of the graph~$G$. By item~1 then $A$ is a cycle of length $t\ge4$ and~$S$ consists  of two neighboring vertices of this cycle. The interior of~$A$ consists of $t-2\ge 2$  other vertices, by corollary~\ref{cint2} these vertices have degree~2 in~$G$ (see picture~\ref{part2cr}a).

3) If the graph~$G$ has at least one single cutset then it has at least two terminal parts and, by item~2, at least four vertices of degree~2.
Let $G$ has no single cutsets. Clearly, a critical biconnected graph is not triconnected, hence, by theorem~\ref{lcycle} the graph~$G$ is a cycle of length at least~4 and has at least four vertices of degree 2.
\end{proof}

Moreover, now we can describe all critical  biconneceted graphs that have exactly 4 vertices of degree~2. Clearly, a cycle on four verices is the only  such  graph without single cutsets.
Now consider such graph~$G$ which have a single cutset. Then the tree~$\B(G)$ must have exactly two leaves, hence,  all non-terminal parts and all single cutsets have degree two in~$\B(G)$. Therefore, each single cutset splits~$G$ into exactly two parts.

Consider a non-terminal part~$A\in \P(G)$. Since~$d_{\B(G)}(A)=2$, the boundary of~$A$ contains exactly  two single cutsets, thus, $\R(A)$ has 3 or 4 vertices.  Let's prove, that~$\I(A)=\varnothing$. If~$A$ is a block or a triangle, it follows from theorem~\ref{t2cr}. If~$A$ is a cycle of length at least~4, any its inner vertex has degree~2 in~$G$, in this case by theorem~\ref{t2cr} the number of vertices of degree~2 in $G$ is at least 5. 

Thus, a non-terminal part of~$A\in \P(G)$ can be a triangle, a cycle of length 4 or a block on 4 vertices and all vertices of $A$ are contained in two single cutsets, adjacent to $A$ in the tree~$\B(G)$. An example of a critical biconnected graph~$G$ with  4 vertices of degree~2 is shown on  figure~\ref{part2cr}b.


\begin{thebibliography}{99}


\bibitem{Mac} {\sc S.\,MacLane.} {\it A structural characterization of planar combinatorial graphs.}
 Duke Math. J. v.3, Number 3 (1937), p.460-472.


\bibitem{CKL} {\sc G.\,~Chartrand, A.\,~Kaugars and D.\,~R.\,~Lick.}
{\it Critically $n$-connected graphs.}
 Proc. Amer. Math. Soc., v.32 (1972), p.~63-68.


\bibitem{Ham} {\sc Y.\,O.\,Hamidoune.}
   {\it On critically $h$-connected simple graphs.} 
   Discr. Math., 1980, vol.~32, p.~257-262.



\bibitem{T} {\sc W.\,T.\,Tutte.} {\it Connectivity in graphs.}
Toronto, Univ. Toronto Press, 1966.

\bibitem{T2}  {\sc W.\,T.\,Tutte.}
   {\it A theory of $3$-connected graphs.}
   Indag. Math. 1961, vol.~23, p.~441-455.

\bibitem{Hoh}     {\sc W.\,Hohberg.}
{\it The decomposition of graphs into $k$-connected components.}
Discr. Math., {\bf 109}, 1992, p.\,133-145.



\bibitem{X}  {\sc F.\,Harary}, {\it Graph theory,} 1969.

\bibitem{O}  {\sc O.\,Ore}, {\it Theory of graphs,} 1962.

\bibitem{KP} {\sc D.\,V.\,Karpov, A.\,V.\,Pastor.}
{\it O the structure of  $k$-connected graph.}
Zap. nauchn. semin. POMI, {\bf 266}, 2000, p.\,76-106.

\bibitem{k02} {\sc D.\,V.\,Karpov.}
{\it Blocks in $k$-connected graphs.}
Zap. nauchn. semin. POMI, {\bf 293}, 2002, p.\,59-93.


\bibitem{k06} {\sc D.\,V.\,Karpov.}
 {\it Cutsets in a k-connected graph.} 
Zap. nauchn. semin. POMI, {\bf  340}, 2006, p.\,33-60.



\end{thebibliography}
\end{document}